\title[ ]{Anderson localization for multi-frequency quasi-periodic operators on $\mathbb{Z}^d$}
\author{Svetlana Jitomirskaya}
\address[S. Jitomirskaya]{Department of Mathematics, University of California, Irvine, CA, USA} \email{szhitomi@math.uci.edu}
\author{Wencai Liu}
\address[W. Liu]{Department of Mathematics, University of California, Irvine, CA, USA} \email{liuwencai1226@gmail.com}
\address{Current address: Department of Mathematics, Texas A\&M University, College Station, TX, USA}
\author{Yunfeng Shi}
\address[Y. Shi]{School of Mathematical Sciences, Peking University,
Beijing 100871,
China} \email{yunfengshi18@gmail.com}
\keywords{Anderson localization, long-range quasi-periodic operators, semi-algebraic sets.}
\newcommand{\R}{\mathbb{R}}
\newcommand{\Z}{\mathbb{Z}}
\newcommand{\T}{\mathbb{T}}
\theoremstyle{plain}
\newtheorem{thm}{Theorem}[section]
 \newtheorem{lem}[thm]{Lemma}
 \newtheorem{defn}[thm]{Definition}
 \theoremstyle{remark}
 \numberwithin{equation}{section}
\begin{document}


\begin{abstract}
We establish Anderson localization for general analytic
$k$-frequency quasi-periodic operators on $\mathbb{Z}^d$ for
\textit{arbitrary} $k,d$.

\end{abstract}

\maketitle
\section{Introduction and main results}

The theory of quasiperiodic opetarors with analytic potentials has
seen dramatic advances in the last 20 years, since the development
of first non-perturbative methods for control of the Green's functions \cite{j94,j,bj,bbook,bg20}
that replaced earlier perturbation of eigenfunctions techniques. The most
well-developed and remarkably rich theory concerns the case of
one-dimensional one-frequency potentials, where powerful
reducibility/dynamical techniques are particularly enhanced by the
analyticity arguments. There are now non-perturbative
results on both small and high coupling sides (\cite{bbook,jmarx,you} and
references therein), global theory \cite{global}, and sharp arithmetic
transitions and related universality (e.g. \cite{ayz,jl1,jl2,jz}). However, if one increases either the
dimension of the undelying torus (the number of frequencies) or, especially, the
space dimension, the situation becomes significantly more
complicated. First, {\bf non-perturbative} { results} become, generally, false
\cite{bbook}, so throwing away small measure sets of parameters where
things actually do sometimes go bad, becomes a necessity. Even more
importantly,  one-dimensional (and therefore dynamical) techniques
are not applicable in higher space dimension. The fist
multi-dimensional localization was obtained by perturbative (KAM)
methods by Chulaevsky-Dinaburg for $k$-frequency operators on
$\ell^2(\Z^d)$ for $k=1$ and arbitrary $d$ \cite{cd}.
However, perturbative techniques have not been made to work to prove
localization in the multi-frequency case, $k>1,$ even for $d=1.$ Bourgain-Goldstein-Schlag developed a way to
apply some of the non-perturbative methods to the  two dimensional
case \cite{bgs}, obtaining localization at high coupling for
$k=d=2.$ This was extended
by Bourgain to arbitrary $k=d$ \cite{gafa}, where he developed a new
powerful scheme that allowed to circumvent the arithmetic difficulties
that restricted \cite{bgs} to $k=d=2.$ In this paper we extend
Bourgain's result to the case of general $k,d$ (in fact, an even
significantly more
general situation).

Let $S$ be a Toeplitz  (operator) matrix on $\ell^2(\Z^d)$ satisfying,
\begin{equation}\label{GO}
|S(n,n')|\leq   e^{-\rho|n-n'|},   \rho>0,
\end{equation}
 where $|n|:=\max\limits_{1\leq i\leq d}|n_i|$ for $n=(n_1,n_2,\cdots,n_d)\in \Z^d$.

Let $v$ be a real analytic function on $\mathbb{T}^b$, where $b=\sum\limits_{i=1}^{d}b_i$ ($b_i\in\mathbb{N}$ for $1\leq i\leq d$).

In this paper, we consider the following operators
\begin{equation}\label{op}
 H(x)=S+\lambda v(x+n\omega)\delta_{nn'},\ \ n,n'\in\mathbb{Z}^{d},
\end{equation}
where
\begin{eqnarray*}
&&x=(x_{11},\cdots,x_{b_11},\cdots,x_{1d},\cdots,x_{b_dd})\in\mathbb{T}^b,\\
&&n\omega=(n_1\omega_{11},\cdots,  n_1\omega_{b_11},\cdots,n_d\omega_{1d},\cdots,n_d\omega_{b_dd}).
\end{eqnarray*}

{\bf Example 0.}  Taking $b_i=1, i=1,...,d$ and the nearest neighbor Laplacian $S$ we
obtain operators considered in \cite{gafa}.

{\bf Example 1.}  $d=2$, $b_1=2,$ $ b_2=1$. $v$ is a function on $\T^3$.  For $x=(x_1,x_2,x_3)$ and $\omega=(\omega_1,\omega_2,\omega_3)$,
the operator becomes
\begin{equation}
 H=S+\lambda v(x_{1}+n_1\omega_1,x_2+n_1\omega_2,x_3+n_2\omega_3)\delta_{nn'},
\end{equation}
where $n=(n_1,n_2)$.

{\bf Example 2.} $b=kd, b_i=k, i=1,...,d,$ $f$ is a function on $\T^k$, and
$$v
(x_{11},\cdots,x_{k1},\cdots,x_{1d},\cdots,x_{kd})=f(x_{11}+\cdots+x_{1d},\cdots,x_{k1}+\cdots+x_{kd}).$$
Then the operator becomes
\begin{equation}\label{matrix}
 H(x) =S+\lambda f(x+nA)\delta_{nn'},
\end{equation}
where $x
\in \T^k, \,n\in\Z^d,$
and $A$ is a $d$ by $k$ matrix of frequencies. This is the most
general form of a $d$-dimensional quasiperiodic operator with a
$k$-dimensional phase space. The Aubry dual family has the form
\begin{equation}\label{dual}
 \tilde H(x) =F+\lambda s(x+Am)\delta_{nn'},
\end{equation}
where $x
\in \T^d, \,n\in\Z^k,$ and $F,S$ are Toeplitz operators with $(n,n')$ terms given by the
$n-n'$ Fourier coefficients of, correspondingly $f,s.$ The standard
Laplacian is therefore dual to the potential given by the sum of
cosines, and the dual of a general analytic potential is a Toeplitz
matrix as above.

{\bf Remark 1.} When considering families (\ref{matrix}) with $A$ restricted
to a linear submanifold of   $d$ by $k$ matrices of frequencies, one
needs to take $b_i$ equal to the number of free variables in the
$i^{th}$ row of $A$ and adjust $v$ accordingly. As such, the family
considered in \cite{gafa} can of course also be recast in this
language: it corresponds to $A$ restricted to $diag(\omega_1,...,\omega_d).$

 We call $x\in\mathbb{T}^b$ the phase, $\omega\in\mathbb{T}^b$ the
 frequency and $\lambda\geq 0$ the coupling.
Let $$x^j:=(x_{1j},\cdots,x_{b_jj})\in\mathbb{T}^{b_j}\  (1\leq j\leq d).$$
We assume $v$ satisfies the following \textit{non-degeneracy} condition:
 for  any $$(x^1,\cdots,x^{j-1},x^{j+1}\cdots,x^d)\in\mathbb{T}^{b-b_j},$$
the function $$\T^{b_j}\ni \theta \mapsto v(x^1,\cdots,x^{j-1},\theta,x^{j+1}\cdots,x^d)$$ is nonconstant.

Denote by ${\rm mes}$ the Lebesgue measure. We say operator $H$
satisfies Anderson localization if it has only pure point spectrum with
exponentially decaying eigenfunctions.

\begin{thm}\label{mthm}
Let $H(x)$ be given by (\ref{op}) with $v$ satisfying the
non-degeneracy condition. Then for any $\delta>0$, there is a $\lambda_0=\lambda_0(\delta,v,\rho,b,d)>0$ such that the following statement holds:
for any $\lambda\geq \lambda_0$ and any $x\in\mathbb{T}^{b}$, there exists  $\Omega=\Omega(x,\lambda v,\delta,\rho,b,d)\subset \mathbb{T}^{b}$ with $\mathrm{mes} (\mathbb{T}^{b}\setminus \Omega)\leq \delta $ such that for $\omega\in \Omega$, $H(x)$ satisfies Anderson localization.
\end{thm}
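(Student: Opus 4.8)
The plan is to follow Bourgain's non-perturbative scheme from \cite{gafa}, combining large deviation estimates for Green's functions with the elimination of ``double resonances'' via semi-algebraic set techniques, but carrying out the induction in a way that is adapted to the anisotropic frequency structure $n\omega = (n_1\omega^1,\dots,n_d\omega^d)$ with $\omega^j \in \mathbb{T}^{b_j}$. The starting point is a multiscale analysis for the finite-volume restrictions $H_\Lambda(x)$ to boxes $\Lambda \subset \mathbb{Z}^d$. At the initial scale, since $\lambda$ is large, for most phases $v(x+n\omega)$ stays away from the energy $E$, so $H_\Lambda - E$ is diagonally dominant and its Green's function decays exponentially; the non-degeneracy hypothesis on $v$ (that $v$ is nonconstant in each block of variables $x^j$ separately) is exactly what is needed to bound, uniformly in the other coordinates, the measure of phases where $|v - E/\lambda|$ is small, using analyticity and a \L ojasiewicz-type / resultant bound.

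The heart of the argument is the inductive step: assuming good Green's function bounds at scale $\ell$ for $\omega$ outside a small-measure ``bad'' set, one passes to scale $L \gg \ell$. One covers a box of side $L$ by translates of boxes of side $\ell$; by the pigeonhole/subharmonicity argument, all but a few of these sub-boxes are ``good'' (their Green's functions decay), and the resolvent identity propagates decay across the big box provided the few bad sub-boxes are not too close to each other and not too close to the boundary. Controlling the location of bad sub-boxes is where the semi-algebraic sets enter: the set of $(x,\omega)$ for which a given sub-box at translate $n$ is bad is (after polynomial approximation of $v$ to the relevant accuracy) contained in a semi-algebraic set of controlled degree; the Bourgain--Goldstein--Schlag transversality/Gromov-type bound on the number of lattice points a semi-algebraic set of small measure and bounded degree can contain then shows that bad translates within the big box are sparse, for $\omega$ outside a further exceptional set of small measure. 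Summing these exceptional sets over all scales and all relevant translates, while keeping the total measure below $\delta$, forces the scales to grow super-exponentially (e.g. $L_{j+1} \sim L_j^{C}$) and uses that the exceptional measure at scale $L_j$ is summably small.

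Once one has, for a fixed $x$ and for $\omega$ in a set $\Omega$ of measure $\geq \mathrm{mes}(\mathbb{T}^b) - \delta$, exponential off-diagonal decay of the Green's function $G_{\Lambda_j}(E)$ at all large scales $j$ for every energy $E$ outside a ``bad energy'' set (which depends on $\omega$ and on the scale), the standard argument of Bourgain converts this into Anderson localization: any generalized eigenfunction $\psi$ with $|\psi(n)| \lesssim (1+|n|)^C$ must, by the resolvent identity applied at a scale where $n$ is ``regular'' for the relevant energy, decay exponentially; and one shows that for a.e.\ generalized energy $E$ (with respect to the spectral measure) $0$ is eventually regular, using that the generalized eigenfunction cannot simultaneously be small at the origin and at a distant point without contradicting the Green's function bound. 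The double-resonance elimination — ensuring that for the given $\omega$ two far-apart boxes cannot both be resonant at the \emph{same} energy — is again handled by a semi-algebraic complexity count applied to the set of $(x, \omega)$ where such a coincidence of near-eigenvalues occurs.

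I expect the main obstacle to be the semi-algebraic complexity bookkeeping in the anisotropic setting: in Bourgain's $k=d$ case the frequency vector enters the translates $n\omega$ in a ``diagonal'' fashion, whereas here each $\omega^j$ multiplies only $n_j$, so the relevant semi-algebraic sets in $(x,\omega)\in\mathbb{T}^b\times\mathbb{T}^b$ have a product-like structure, and one must verify that the transversality estimates and the counting of resonant translates still go through with the same polynomial degree control and uniformity in the ``frozen'' coordinates $(x^1,\dots,\widehat{x^j},\dots,x^d)$ provided by the non-degeneracy assumption. A secondary difficulty is arranging the hierarchy of scales and exceptional-set budgets so that everything is uniform in the fixed phase $x$ — i.e.\ that $\Omega$ may depend on $x$ but its measure deficiency is bounded by $\delta$ independently of $x$ — which requires tracking the $x$-dependence carefully through each application of the large deviation theorem and each semi-algebraic restriction.
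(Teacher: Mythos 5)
There is a genuine gap at the heart of your inductive step. You propose to control the ``bad'' sub-boxes inside a large box by a counting argument: approximate $v$ by polynomials, note that the set of phases/frequencies where a sub-box at translate $n$ is bad is semi-algebraic of controlled degree and small measure, and then invoke a transversality/lattice-point bound to conclude that the bad translates are sparse for $\omega$ outside a small exceptional set. That is essentially the Bourgain--Goldstein--Schlag mechanism, and it is exactly the step that does \emph{not} extend beyond $k=d=2$: the bound on the number of times the orbit $\{x+n\omega\}_{n\in\Lambda}$ hits a small-measure semi-algebraic set of bounded degree requires, already in the $2$-dimensional case, an explicit arithmetic condition on $\omega$ (the ``oscillation versus degree'' argument for algebraic curves), and no analogue of that statement is available for $d\geq 3$. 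Without it, your claim that ``all but a few of these sub-boxes are good, and the few bad ones are well separated'' has no proof, and the resolvent-identity propagation across the large box cannot be started.

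The paper circumvents this differently, following and extending Bourgain's scheme: instead of counting bad translates for a fixed $\omega$, one eliminates frequencies at every scale. Using the elimination-of-variables and product-space decomposition lemmas for semi-algebraic sets (Lemmas \ref{svl}, \ref{projl}, \ref{rmf}), Theorem \ref{mul} produces a semi-algebraic set $\Omega_3$ of frequencies, with measure deficit $N_3^{-c_3}$, such that for $\omega\in\Omega_3$ and \emph{every} $x$ and $E$ there is a suitable intermediate scale $N\in(N_3^{c_3},N_3^{c_4})$ at which \emph{all} sites $k\in\Lambda\setminus\bar\Lambda$ avoid the bad phase set $X_{N_1}$; the bad sites are thus confined a priori to one small central cube $\bar\Lambda$ of side $N^{1/(10d)}$, not merely ``sparse''. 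The residual cluster is then handled not by gluing a chain of intermediate scales but by the several-variables matrix-valued Cartan estimate (Lemma \ref{mcl}, Theorem \ref{thmcar}), which converts the subexponential norm bound on $G_{\Lambda\setminus\bar\Lambda}$ plus property P at scale $N_2$ into the measure estimate \eqref{Apr25}, and Theorem \ref{mul1} then jumps directly from scales $N_1,N_2$ to the subexponentially large window $[N_3,N_3^2]$. Your initial-scale argument (\L ojasiewicz bound plus diagonal dominance, Theorem \ref{thmini}) and your final paragraph (conversion of the single-energy estimates into Anderson localization by the standard elimination-of-energy argument for generalized eigenfunctions) do match the paper, which at that last stage simply refers to \cite{bbook,bgs}; but as written, the core multiscale step of your proposal would fail for the $d\geq 3$, multi-frequency situation this theorem is about.
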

\newpage\clearpage
{\bf Remarks} \begin{enumerate}\item
In particular,  this holds for all operators (\ref{matrix}) with
arbitrary $k,d$ and any non-constant analytic function $f,$\footnote{As
  in Remark 1, the non-degenracy condition on $v$ leads to additional non-degeneracy conditions on $f$
  if the number of free variables in a certain row of the submanifold
  is bounded by $1.$ In particular, for $A$ restricted to
  $diag(\omega_1,...,\omega_d),$ as in \cite{gafa}, the required
  non-degeneracy condition is exactly as in \cite{gafa}.} which is an
important building block in the proof of absolutely continuous
spectrum for operators (\ref{dual}) \cite{bjp} and was the key initial
motivation for our work.
\item We note that our phase space dimension $b$ satisfies $b\geq d$ since $b=\sum_{i=1}^d b_i$, $b_i\geq 1$. This is essential
for our arguments. As shown in {\bf Example 2}, general quasiperiodic
operators always have $b\geq d$. However, operators \eqref{op} with $b<d$, for example $V_{n_1,n_2}=v(x+n_1\omega, x+n_2\omega)$, also appear naturally,
e.g. in the study of interacting particles, and our proof does not
apply in this setting. A localization result for a model with $b=1,
d=2$ was recently obtained by Bourgain-Kachkovskiy \cite{bk}.
\item Previous multidimensional/multifrequency localization results \cite{bgs,gafa} were
  not only restricted to $k=d$ ,  but also done only for the
  nearest neighbor Laplacian, i.e. $S(n,n')=\delta_{|n-n'|,1}.$
  The extension to general $S$ as in (\ref{GO}) is motivated by the
  Aubry duality purposes in \cite{bjp}. 
Localization for long-range operators (general $S$) was previously
obtained for $k=1$ in \cite{cd} and, nonperturbatively, for $k=d=1$ in \cite{bbook,bj}.
\end{enumerate}
The main scheme of our proof is definitely adapted from  Bourgain
\cite{gafa}. However, while our result is significantly more general and more
technically complex, our argument can also be viewed as both a
clarification and at the same time {\bf streamlining} of
\cite{gafa}. Indeed, our proof, while including more detail and hopefully
increasing the readability, is only
shorter  than the corresponding part of  Bourgain's. This is due to several important technical
improvements that we add to Bourgain's scheme. One important highlight
is that,  in the process of deterministic  multi-scale analysis proceeding
    from scale $N_1$ to $N_2$, a chain of scales between $N_1$ and
    $N_2$ has  always   been used in the past work,  \cite{gafa,bgs}. Here,
  instead of gluing   ``good"  Green's function at multiple  scales
  between $N_1$ and  $N_2$  to establish the ``goodness" of Green's
  function at scale $N_2$, we  find a way to directly use the ``good"
  Green's function at scale   $N_1$ + subexponential  bound of  the
  norm  to prove the ``goodness" of Green's function at scale $N_2$.

Another issue we want to highlight is that the $k=d=2$ analysis of
\cite{bgs} required dealing with many different types of elementary
regions, something that would be prohibitively difficult to carry out
in higher dimensions. In dealing with higher dimensions in
\cite{gafa} Bourgain
significantly reduces the allowed elementary regions. This
comes at the price of some complications in dealing  with the  lattice points at
 the boundary of the elementary regions, which Bourgain
 claims can be carried out, but provides no detail. We use the same
 (slightly corrected)
 type of
 restriction on the elementary regions but believe this
 issue is not entirely trivial and tackling it requires a certain modification of
 the procedure, which we provide in full detail.


Non-perturbative proofs of localization for $d>1$ are in a sense a
version of deterministic multi-scale analysis. The latter is a
powerful method originally developed for random operators by
Fr\"ohlich and Spencer \cite{fs}, that crucially relied on independence and
Wegner's Lemma that is effectively dependent on rank-one
perturbations. For the deterministic version, difficulties with lack of
independence/rank one perturbations are circumvented by the semi-algebraic sets
considerations and subharmonicity arguments \cite{bbook}. The
non-perturbative proofs consist of two parts. First,
one needs to obtain measure and complexity estimates for
phases/frequencies with exponential off-diagonal decay and
subexponential upper bounds for the matrix
elements of the Green's function for box-restricted operators for
a given energy. From this, localization follows through elimination of
energy via an argument involving complexity bounds on semi-algebraic
sets. The second part is by now rather standard and follows the
reasonably short argument in \cite{gafa} essentially verbatim. In fact,
it is the first part that presents the main difficulty associated with
higher dimensions. Thus we
focus only on the first, single energy, part here. This is also where
the key difficulty in extending \cite{bgs} and the key difference
between \cite{gafa} and \cite{bgs} lies. One needs to
guarantee a sublinear upper bound on the number of
times the ergodic trajectory hits certain forbidden regions of given
measure/algebraic complexity, without further detail on the structure
of those fordbidden regions. A key argument in \cite{bgs} is a Lemma
that does guarantee it for  $k=b=2$   under an explicit arithmetic
condition on the frequencies. Roughly, it means that too many points
on the trajectory of rotation close to an algebraic curve of a bounded
degree would force it to oscillate more than the degree
allows. However, this statement is not extendable to $d\geq 3.$
In \cite{gafa} Bourgain instead developed a way to restrict to suitable
frequencies already for the first step, which turned out to be a very
robust approach that we also develop here. Besides the elimination of energy argument, we do not include detailed
proofs of two further statements very similar to those in \cite{gafa},
and with proofs presented there in a very clear way. The proofs  that are similar to Bourgain's  that we do
present either have certain novelty or contain important technical
clarifications.

In Section 2 we introduce the main concepts and also list the above
mentioned results for which we do not present detailed proofs. One
such concept is ``property P at scale $N$'' - essentially, the single
energy statement one wants to establish for all large scales, that allows to streamline
certain formulations.  Section 3 is devoted to the main multi-scale
argument: property P at scales $N, N^c$ implies property P at an
interval of subexponentially large scales, Theorem \ref{mul1}. In section 4 we take
care of the initial scale and give a very short argument to obtain the
final single energy estimate, Theorem \ref{ldt}, from Theorem
\ref{mul1}. In the appendix we prove a several variables matrix-valued
Cartan estimate (Lemma \ref{mcl}  used in the proof of Theorem \ref{mul1}), that follows Bourgain's one-variable argument in
\cite{bbook} but also uses high-dimensional Cartan sets estimates
of \cite{gs}.

\section{Preparations}
\subsection{Notation}

For any $x\in\mathbb{R}^{d_1}$ and $X\subset\mathbb{R}^{d_1+d_2}$,  denote  the $x$-section of $X$ by
$$X(x):=\{y\in\mathbb{R}^{d_2}:\  (x,y)\in X\}.$$
Let $\tilde{b}=\max_i b_i$.
For any $x\in\mathbb{T}^b$ and $1\leq j\leq d$, let ${x}_j^\neg=(x^1,\cdots,x^{j-1},x^{j+1}\cdots,x^d)\in\mathbb{T}^{b-b_j}$.
For $x=(x_1,x_2,\cdots,x_l),y=(y_1,y_2,\cdots,y_l)\in \R^{l}$, let $|x-y|=\max_{i}|x_i-y_i|$.

For $\Lambda_1, \Lambda\subset\mathbb{Z}^d$,  we introduce
$$\mathrm{diam}(\Lambda)=\sup_{n,n'\in \Lambda}|n-n'|, \ \mathrm{dist}(m,\Lambda)=\inf_{n\in \Lambda}|m-n| \ (m\in\mathbb{R}^d),$$
and $\mathrm{dist}(\Lambda_1,\Lambda)=\inf\limits_{n\in \Lambda_1}\mathrm{dist}(n,\Lambda)$.

We also use $\|\cdot\|$ as $\ell^2$ norm of the matrix. For
convenience,  in the following, we study operator $\lambda^{-1}H(x) $.
We always assume $\lambda>1$. Since the spectra of $\lambda^{-1}H(x)$
are bounded by $C(S,v)$,
we can further assume $E$ is bounded.

\subsection{Green's functions and elementary regions}
For $\Lambda\subset\mathbb{Z}^d$,  let $R_{\Lambda}$ be the restriction operator, i.e.,  $(R_{\Lambda}\xi)(n)=\xi(n)$ for $n\in \Lambda$, and
$(R_{\Lambda}\xi)(n)=0$ for $n\notin \Lambda$. Denote by  $H_{\Lambda}=R_{\Lambda} HR_{\Lambda}$ and the Green's functions
\begin{equation*}
  G_{\Lambda}(E;x)=(R_{\Lambda} (\lambda^{-1}H-E+i0)R_{\Lambda})^{-1}.
\end{equation*}
We will also write $G_{\Lambda}$ when there is no ambiguity.
Clearly,
\begin{equation}\label{GGshift}
  G_{n+\Lambda}(x)=G_{\Lambda}(x+n\omega).
\end{equation}
We denote by $Q_N$ an elementary region of size $N$ centered at 0, which is one of the following regions,
\begin{equation*}
  Q_N=[-N,N]^d
\end{equation*}
or
$$Q_N=[-N,N]^d\setminus\{n\in\mathbb{Z}^d: \ n_i\varsigma_i 0, 1\leq i\leq d\},$$
where  for $ i=1,2,\cdots,d$, $ \varsigma_i\in \{<,>,\emptyset\}^{d}$ and at least two $ \varsigma_i$  are not $\emptyset$.

Denote by $\mathcal{E}_N^{0}$ the set of all elementary regions of size $N$ centered at 0. Let $\mathcal{E}_N$ be the set of all translates of  elementary regions, namely,
$$\mathcal{E}_N:=\{n+Q_N\}_{n\in\mathbb{Z}^d,Q_N\in \mathcal{E}_N^{0}}.$$


\subsection{Semi-algebraic sets}
\begin{defn}[Chapter 9, \cite{bbook}]
A set $\mathcal{S}\subset \mathbb{R}^n$ is called a semi-algebraic set if it is a finite union of sets defined by a finite number of polynomial equalities and inequalities. More precisely, let $\{P_1,\cdots,P_s\}\subset\mathbb{R}[x_1,\cdots,x_n]$ be a family of real polynomials whose degrees are bounded by $d$. A (closed) semi-algebraic set $\mathcal{S}$ is given by an expression
\begin{equation}\label{smd}
\mathcal{S}=\bigcup\limits_{j}\bigcap\limits_{\ell\in\mathcal{L}_j}\left\{x\in\mathbb{R}^n: \ P_{\ell}(x)\varsigma_{j\ell}0\right\},
\end{equation}
where $\mathcal{L}_j\subset\{1,\cdots,s\}$ and $\varsigma_{j\ell}\in\{\geq,\leq,=\}$. Then we say that $\mathcal{S}$ has degree at most $sd$. In fact, the degree of $\mathcal{S}$ which is denoted by $\deg(\mathcal{S})$, means the  smallest $sd$ over all representations as in (\ref{smd}).
\end{defn}

In \cite{gafa}, Bourgain proved a  result for eliminating several variables.
\begin{lem}[Lemma 1.18, \cite{gafa}]\label{svl}
Let $\mathcal{S}\subset [0,1]^{d+r}$ be a semi-algebraic set of degree $B$ and such that
$$\mathrm{mes}(\mathcal{S}(y))<\eta\ \mathrm{for}\ \forall\  y\in [0,1]^r.$$
Then the set
\begin{equation*}
\left\{(x_1,\cdots,x_{2^r})\in [0,1]^{d2^r}:\  \bigcap\limits_{1\leq i\leq 2^r}\mathcal{S}(x_i)\neq\emptyset\right\}
\end{equation*}
is semi-algebraic of degree at most $B^{C}$ and measure at most
\begin{equation*}
B^{C}\eta^{d^{-r}2^{-r(r-1)/2}},
\end{equation*}
where $C=C(d,r)>0$.
\end{lem}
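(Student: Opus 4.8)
The plan is to treat the degree bound and the measure bound separately, obtaining the latter by induction on $r$. For the degree: writing the set in question as
\[
\mathcal{T}_r:=\Big\{(x_1,\dots,x_{2^r})\in[0,1]^{d2^r}:\ \exists\,y\in[0,1]^r\ \text{with}\ (x_i,y)\in\mathcal{S}\ \text{for all }i\Big\},
\]
it is the image, under the projection eliminating the block $y$, of $\{(x_1,\dots,x_{2^r},y):(x_i,y)\in\mathcal{S}\ \forall i\}\subset[0,1]^{d2^r+r}$, which is an intersection of $2^r$ sets each of degree $\le B$, hence of degree $\le B^{C_0}$. By the effective Tarski--Seidenberg principle (as recorded in \cite{bbook}) a projection inflates the degree only polynomially, so $\deg(\mathcal{T}_r)\le B^{C_1}$ with $C_1=C_1(d,r)$. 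This part uses no measure hypothesis.

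The base case $r=1$ of the measure bound is the one-variable semi-algebraic elimination estimate, established in \cite{bbook} (and appearing in this form in \cite{gafa}): if $\mathcal{R}\subset[0,1]^{d'}\times[0,1]$ is semi-algebraic of degree $D$ and every fiber $\{\xi:(\xi,t)\in\mathcal{R}\}$ has $d'$-dimensional measure $<\zeta$, then $\mathrm{mes}\{(\xi,\xi'):\exists\,t,\ (\xi,t),(\xi',t)\in\mathcal{R}\}\le D^{C}\zeta^{1/d'}$ with $C=C(d')$. I would take this as a black box; it is the genuinely nontrivial ingredient, resting on the fact that a bounded-degree semi-algebraic set cannot ``spread out'' enough to make its fiber-intersection set large, which is quantified through bounds on the number of connected components and cylindrical cell decompositions.

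For the inductive step, split $[0,1]^r=[0,1]^{r-1}_{y'}\times[0,1]_{y_r}$. For each fixed $y_r$ the slice $\mathcal{S}^{(y_r)}:=\{(x,y'):(x,y',y_r)\in\mathcal{S}\}\subset[0,1]^{d+(r-1)}$ has degree $\le B$ and, by hypothesis, each of its $y'$-fibers still has $d$-measure $<\eta$; applying the case $r-1$ to it yields
\[
\mathcal{T}^{(y_r)}:=\Big\{(x_1,\dots,x_{2^{r-1}}):\ \bigcap_{i\le 2^{r-1}}\mathcal{S}^{(y_r)}(x_i)\neq\emptyset\Big\}
\]
of degree $\le B^{C}$ and measure $\le B^{C}\eta^{\beta}$, where $\beta=d^{-(r-1)}2^{-(r-1)(r-2)/2}$. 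Assemble these into $\mathcal{U}:=\{(\vec x,y_r):\vec x\in\mathcal{T}^{(y_r)}\}\subset[0,1]^{d2^{r-1}}\times[0,1]$; exhibiting $\mathcal{U}$ as a projection (eliminating $y'$) of a degree-$\le B^{C_0}$ set shows it is semi-algebraic of degree $\le B^{C}$, and its $y_r$-fibers are exactly the sets $\mathcal{T}^{(y_r)}$, of measure $\le B^{C}\eta^{\beta}$. Applying the base case to $\mathcal{U}$ with $d'=d2^{r-1}$, $\zeta=B^{C}\eta^{\beta}$, the set $\mathcal{V}:=\{(\vec x,\vec x'):\exists\,y_r,\ \vec x\in\mathcal{T}^{(y_r)}\ \text{and}\ \vec x'\in\mathcal{T}^{(y_r)}\}$ has measure $\le B^{C}(B^{C}\eta^{\beta})^{1/(d2^{r-1})}\le B^{C'}\eta^{\beta/(d2^{r-1})}$, and $\beta/(d2^{r-1})=d^{-r}2^{-r(r-1)/2}$ by a one-line computation. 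Finally, identifying $(x_1,\dots,x_{2^r})$ with its first and second halves $(\vec x,\vec x')$, one has $\mathcal{T}_r\subseteq\mathcal{V}$: any common point $(y',y_r)$ of all the fibers $\mathcal{S}(x_i)$ provides the shared $y_r$ and serves as the witness for each half (with $y'$ reused), so it lies in $\mathcal{V}$. Hence $\mathrm{mes}(\mathcal{T}_r)\le\mathrm{mes}(\mathcal{V})\le B^{C'}\eta^{d^{-r}2^{-r(r-1)/2}}$, closing the induction.

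The main obstacle is the base case: the inductive step is essentially bookkeeping --- splitting the $2^r$-tuple into halves, controlling the polynomial degree inflation under the two Tarski--Seidenberg projections, and observing that relaxing ``one common $y\in[0,1]^r$'' to ``a common last coordinate $y_r$ with independent $y'$ on the two halves'' only enlarges the set and is therefore harmless for an upper bound. Converting fiberwise smallness into a quantitative power-of-$\eta$ bound on the fiber-intersection set is exactly where the geometry of semi-algebraic sets must be used, and it is the only step requiring a substantial external input.
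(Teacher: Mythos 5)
Your proof is correct and follows essentially the same route as the source: this paper imports the lemma from \cite{gafa} without proof, and Bourgain's own argument there is exactly your induction on $r$ by halving the $2^r$-tuple, with the $r=1$ two-section elimination estimate (which rests on the projection/transversality decomposition recorded here as Lemma \ref{projl}) as the base case. Your degree control via effective Tarski--Seidenberg and the bookkeeping $\beta_r=\beta_{r-1}/(d2^{r-1})$, which reproduces the exponent $d^{-r}2^{-r(r-1)/2}$, match the cited proof.
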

Another important fact is the following decomposition Lemma for semi-algebraic sets in the product spaces.
\begin{lem}[\cite{gafa,bbook}]\label{projl}
Let $\mathcal{S}\subset[0,1]^{d=d_1+d_2}$ be a semi-algebraic set of degree $\deg(\mathcal{S})=B$ and $\mathrm{mes}_d(\mathcal{S})\leq\eta$, where
\begin{equation}
\log B\ll \log\frac{1}{\eta},
\end{equation}
with
$$ \eta^{\frac{1}{d}}\leq\epsilon.$$
Then there is a decomposition of $\mathcal{S}$ as
$$\mathcal{S}=\mathcal{S}_1\cup\mathcal{S}_2$$
such that the projection of $\mathcal{S}_1$ on $[0,1]^{d_1}$ has small measure
$$\mathrm{mes}_{d_1}(\mathrm{Proj}_{[0,1]^{d_1}}\mathcal{S}_1)\leq B^{C(d)}\epsilon,$$
and $\mathcal{S}_2$ has the transversality property
$$\mathrm{mes}_{d_2}(\mathcal{L}\cap \mathcal{S}_2)\leq B^{C(d)}\epsilon^{-1}\eta^{\frac{1}{d}},$$
where $\mathcal{L}$ is any $d_2$-dimensional hyperplane in $[0,1]^d$ s.t.,
$$\max\limits_{1\leq j\leq d_1}|\mathrm{Proj}_\mathcal{L}(e_j)|<{\epsilon},$$
where we denote by $e_1,\cdots,e_{d_1}$ the  coordinate vectors in $\R^{d_1}$.
\end{lem}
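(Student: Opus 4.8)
The plan is to exploit the fact that the hypothesis $\max_{j\le d_1}|\mathrm{Proj}_{\mathcal{L}}(e_j)|<\epsilon$ says precisely that the direction space of $\mathcal{L}$ is $O(\epsilon)$-close to $\{0\}^{d_1}\times\mathbb{R}^{d_2}$, so that $\mathcal{L}=\{(x_0+Ny,y):y\}$ with $x_0\in[0,1]^{d_1}$ and $N$ a $d_1\times d_2$ matrix of norm $\le C(d)\epsilon$. The first step is the \emph{straightening} observation: the shear $(u,y)\mapsto(u+Ny,y)$ is unimodular and affine, hence carries any semi-algebraic set to one of the same degree and the same $d$-measure; writing $\mathcal{S}^N:=\{(u,y):(u+Ny,y)\in\mathcal{S}\}$, one gets $\mathrm{mes}_{d_2}(\mathcal{L}\cap\mathcal{S})=(1+O(\epsilon^2))\,\mathrm{mes}_{d_2}(\mathcal{S}^N(x_0))$, so it is enough to control the $d_2$-measures of the fibers of the family $\{\mathcal{S}^N\}_{\|N\|\le C(d)\epsilon}$. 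Accordingly I would let
\[
G:=\Big\{x\in[0,1]^{d_1}:\ \sup_{\|N\|\le C(d)\epsilon}\mathrm{mes}_{d_2}\big(\mathcal{S}^N(x)\big)>\epsilon^{-1}\eta^{1/d}\Big\},
\]
take $G^{+}$ to be the $C(d)\epsilon$-neighborhood of $G$, and set $\mathcal{S}_1:=\mathcal{S}\cap(G^{+}\times[0,1]^{d_2})$, $\mathcal{S}_2:=\mathcal{S}\setminus\mathcal{S}_1$. The transversality bound is then immediate: for $\mathcal{L}$ with data $(x_0,N)$, either $\mathcal{L}\cap\mathcal{S}_2=\emptyset$, or some $(x_0+Ny,y)\in(\mathcal{L}\cap\mathcal{S})\setminus\mathcal{S}_1$, which forces $x_0+Ny\notin G^{+}$ and hence, since $|Ny|\le C(d)\epsilon$, also $x_0\notin G$; then $\mathrm{mes}_{d_2}(\mathcal{S}^N(x_0))\le\epsilon^{-1}\eta^{1/d}$ and therefore $\mathrm{mes}_{d_2}(\mathcal{L}\cap\mathcal{S}_2)\le\mathrm{mes}_{d_2}(\mathcal{L}\cap\mathcal{S})\le 2\epsilon^{-1}\eta^{1/d}$. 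Also $\mathrm{Proj}_{[0,1]^{d_1}}\mathcal{S}_1\subseteq G^{+}$.

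Everything is thus reduced to showing $\mathrm{mes}_{d_1}(G^{+})\le B^{C(d)}\epsilon$. For a \emph{fixed} $N$, Chebyshev together with $\mathrm{mes}_d(\mathcal{S}^N)=\mathrm{mes}_d(\mathcal{S})\le\eta$ already gives $\mathrm{mes}_{d_1}\{x:\mathrm{mes}_{d_2}(\mathcal{S}^N(x))>\epsilon^{-1}\eta^{1/d}\}\le\epsilon\,\eta^{1-1/d}\le\epsilon$, so the whole difficulty is the uniformity over the continuous family of directions $N$, together with the passage from $G$ to its $\epsilon$-neighborhood. To handle this I would bring in the quantitative structure theory: by the Milnor--Thom bounds $\mathcal{S}$ has $\le B^{C(d)}$ connected components, and by the Yomdin--Gromov algebraic reparametrization lemma (in the effective form underlying Chapter~9 of \cite{bbook}) $\mathcal{S}$ is covered by at most $B^{C(d)}$ images of unit cubes under semi-algebraic maps of bounded $C^1$-norm (at most $B^{C(d)}$); composing with the shears $\Phi_N$, whose norms are $\le C(d)$, produces the \emph{same} kind of covering for every $\mathcal{S}^N$ with constants depending only on $B,d$. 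From such uniformly controlled parametrizations, the fiber-volume estimate and the resulting projection/neighborhood estimate become elementary multivariable-calculus estimates that hold uniformly in $N$, and one reads off that $G$ (hence $G^{+}$) is contained in a $B^{C(d)}$-bounded union of boxes of total $d_1$-measure $\le B^{C(d)}\epsilon$; the hypotheses $\log B\ll\log(1/\eta)$ and $\eta^{1/d}\le\epsilon$ are exactly what allow the $B^{C(d)}$-factors and the $\epsilon$-fattening to be absorbed.

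The step I expect to be the main obstacle is precisely this last one. The underlying geometric dichotomy — a semi-algebraic set of small $d$-measure either has small $[0,1]^{d_1}$-projection, in which case it is absorbed into $\mathcal{S}_1$, or it is genuinely spread out over the $x$-directions, in which case all of its nearly-coordinate $d_2$-sections are small — is intuitively clear from the model of an axis-parallel thin slab, but making it quantitative and \emph{uniform over all} admissible hyperplane directions while keeping every complexity estimate polynomial in $B$ is the technical heart of the matter. This is where one must use both that a degree-$B$ polynomial restricts to a degree-$B$ polynomial on every affine subspace (so that sub-level-set volume bounds are direction-independent) and that the Yomdin--Gromov constants depend only on $B$ and $d$. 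A secondary point requiring care is that the volume of a semi-algebraic fiber is not itself a semi-algebraic function of the parameters, so the uniformity must be extracted through the parametrizations rather than by crudely quantifying over a semi-algebraic family.
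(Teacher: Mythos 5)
Your reduction is fine as far as it goes: representing every admissible $\mathcal{L}$ as a graph $y\mapsto (x_0+Ny,y)$ with $\|N\|\leq C(d)\epsilon$, passing to the sheared sets $\mathcal{S}^N$, and splitting $\mathcal{S}$ along the fattened bad set $G^{+}$ does correctly reduce the lemma to the single estimate $\mathrm{mes}_{d_1}(G^{+})\leq B^{C(d)}\epsilon$. But that estimate \emph{is} the lemma, and your proposal does not prove it. The paragraph invoking Milnor--Thom and Yomdin--Gromov contains no actual mechanism by which $B^{C(d)}$ parametrizations with bounded $C^1$ norms yield a bound on the set of base points $x$ admitting \emph{some} shear $N$ with a large fiber $\mathcal{S}^N(x)$; "one reads off" is an assertion, not an argument, and you yourself flag this step as the main obstacle. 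Note also why nothing soft can close it: for fixed $N$ Chebyshev gives measure $\leq \epsilon\,\eta^{1-1/d}$, but the fiber measure of $\mathcal{S}^N(x)$ can fluctuate in $N$ on scale $\eta$ (already for the tilted slab $\mathcal{S}=\{|x_1-\epsilon y_1|\leq \eta\}$), so a net in $N$ fine enough to control the supremum has cardinality of order $(\epsilon/\eta)^{d_1d_2}$ and the union bound produces $\epsilon^{1+d_1d_2}\eta^{1-1/d-d_1d_2}$, which diverges as $\eta\to 0$ (only $\eta^{1/d}\leq\epsilon$ is assumed). Conversely, extracting $\mathrm{mes}_{d_1}(G^{+})\lesssim B^{C}\epsilon$ from any covering of $\mathcal{S}$ is essentially equivalent to the projection/transversality dichotomy you set out to prove, so the argument is circular at its core: the genuinely semi-algebraic input (quantitative Tarski--Seidenberg, control of sections and of the boundary of projections, an induction on dimension) has not been brought to bear.

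For comparison, the paper itself does not prove this statement; it is quoted from \cite{gafa} and Chapter 9 of \cite{bbook}, where the decomposition is constructed by precisely such an induction on the dimension via cell decomposition, and where $\mathcal{S}_1,\mathcal{S}_2$ come out \emph{semi-algebraic of degree at most $B^{C(d)}$}. This is a secondary but real defect of your construction even if the missing measure bound were supplied: your $G$ is defined through a supremum of fiber measures over a continuum of shears, and fiber measure is not a semi-algebraic function of the parameters (as you note), so your $\mathcal{S}_1,\mathcal{S}_2$ carry no degree bound. The literal statement above does not demand it, but the algebraic structure of the pieces is what makes the lemma combinable with Lemma \ref{svl} in arguments like Lemma \ref{rmf}, so a proof in the spirit of \cite{bbook} should preserve it.
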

We then have
\begin{lem}\label{rmf}
Suppose that $\omega^i\in \R^{l_i}$ ($i=1,2,3,\cdots,r$) and $l=\sum_{i=1}^r l_i$.
Let  $\mathcal{S}\subset [0,1]^{l J}$ be a semi-algebraic set of degree $B$ and such that
$$\mathrm{mes}(\mathcal{S})<\eta.$$
For  $\omega= (\omega^1,\cdots,\omega^r)\in[0,1]^{l}$ and $ n=(n_1,n_2,\cdots,n_r)\in\mathbb{Z}^r$, define
$$n\omega=(n_1\omega^1,n_2\omega^2,\cdots, n_r\omega^r).$$
Let $\mathcal{N}^1,\cdots,\mathcal{N}^{J-1}\subset \mathbb{Z}^r$ be finite sets with the following property
$$\min\limits_{1\leq s\leq r}|n_s|>(B\max\limits_{1\leq s\leq r}|m_s|)^C, $$
where $n\in \mathcal{N}^{i}, m\in\mathcal{N}^{i-1}\  (2\leq i\leq J-1)$, where $C=C(J,l)$.
 Assume also
\begin{equation}\label{neta}\max\limits_{n\in\mathcal{N}^{J-1}}|n|^C<\frac{1}{\eta}.\end{equation}
Then
\begin{equation*}
\mathrm{mes}(\{\omega\in [0,1]^{l}:\  \exists \ n^{(i)}\ \in\mathcal{N}^i\  s.t.,\  (\omega,n^{(1)}\omega,\cdots,n^{(J-1)}\omega)\mod\mathbb{Z}^{lJ}\in \mathcal{S}\})\leq B^{C}\delta,
\end{equation*}
where
$$\delta^{-1}=\min\limits_{n\in\mathcal{N}^1}\min\limits_{1\leq s\leq r}|n_s|.$$
\end{lem}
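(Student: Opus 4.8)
The plan is to prove this by induction on $J$, peeling off one "shifted copy" of $\omega$ at a time, starting from the last one (corresponding to $\mathcal{N}^{J-1}$). I would set up the induction so that at each step we replace the semi-algebraic set $\mathcal{S} \subset [0,1]^{lJ}$ by a new semi-algebraic set in $[0,1]^{l(J-1)}$ of controlled degree, while only losing a factor $B^{C}$ in the measure estimate, until we reduce to the trivial case $J=1$ where $\mathrm{mes}(\mathcal{S}) < \eta$ directly (and in fact $\delta^{-1}$ does not even appear, so one needs to be slightly careful about where the $\delta$ in the final bound is generated — it comes from the very last eliminated variable, i.e.\ the first shift, which is why $\delta$ is defined via $\mathcal{N}^1$).

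First I would treat the inductive step via Lemma \ref{projl} (the transversality/decomposition lemma). Write $[0,1]^{lJ} = [0,1]^{l(J-1)} \times [0,1]^{l}$, with the last coordinate block thought of as the slot into which $n^{(J-1)}\omega$ gets plugged. Apply Lemma \ref{projl} with $d_1 = l(J-1)$, $d_2 = l$, and $\epsilon$ chosen (say $\epsilon = \eta^{1/(2lJ)}$ or similar) so that $\eta^{1/d} \le \epsilon$ holds and $B^{C}\epsilon$ is still small. This decomposes $\mathcal{S} = \mathcal{S}_1 \cup \mathcal{S}_2$. For the $\mathcal{S}_1$ part: the set of $\omega$ with $(\omega, n^{(1)}\omega, \ldots, n^{(J-1)}\omega) \in \mathcal{S}_1$ is contained in the preimage under $\omega \mapsto (\omega, n^{(1)}\omega, \ldots, n^{(J-2)}\omega)$ of $\mathrm{Proj}_{[0,1]^{l(J-1)}}\mathcal{S}_1$, which has small measure $B^{C}\epsilon$; here one uses that the map $\omega \mapsto (n^{(1)}\omega,\ldots,n^{(J-2)}\omega)$ is a fixed integer linear map (so measure-nonincreasing on $[0,1]^l$ after restricting appropriately, or one absorbs the Jacobian into $B^C$ using $|n| < 1/\eta$ from \eqref{neta}). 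For the $\mathcal{S}_2$ part: for each fixed $\omega' = (\omega, n^{(1)}\omega, \ldots, n^{(J-2)}\omega)$, the set of possible last-block values is $\mathcal{S}_2(\omega')$, and we need $n^{(J-1)}\omega \pmod{\Z^l}$ to land in it. The point of transversality is that the affine line (more precisely, the $l$-dimensional affine subspace) $\{t \mapsto (\omega, \ldots, \omega, \text{block } = n^{(J-1)}\omega)\}$, as $\omega$ varies, has small projection onto each $e_j$ because $n^{(J-1)}$ is huge: $|\mathrm{Proj}(e_j)| \sim |n^{(J-2)}|/|n^{(J-1)}|$ or $1/|n^{(J-1)}|$, which by the growth hypothesis $\min_s |n_s^{(J-1)}| > (B \max_s |m_s|)^C$ is $< \epsilon$. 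So Lemma \ref{projl} gives $\mathrm{mes}(\mathcal{L} \cap \mathcal{S}_2) \le B^{C}\epsilon^{-1}\eta^{1/d}$ along the relevant hyperplane, which translates (after accounting for the winding of $n^{(J-1)}\omega$ on the torus, costing at most $|n^{(J-1)}|^{Cl}$ sheets, controlled by \eqref{neta}) into a bound on the set of bad $\omega$; this is a new semi-algebraic set in $\omega' \in [0,1]^{l(J-1)}$ of degree $B^{C}$ with measure $\le B^{C}\eta'$ for a suitable $\eta'$, and I would iterate.

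The bookkeeping to make the induction actually close is the main obstacle, and there are two real subtleties. First, the degree and the "effective $\eta$" must be tracked through $J-1$ iterations so that at every stage the hypotheses $\log B \ll \log(1/\eta)$ and $\eta^{1/d} \le \epsilon$ of Lemma \ref{projl} remain valid; the growth condition $|n^{(i)}| > (B|n^{(i-1)}|)^C$ with a $C$ depending on $J$ and $l$ is exactly what is engineered to survive this, but one has to choose the tower of $\epsilon$'s and $C$'s consistently, and verify \eqref{neta} is strong enough to absorb the torus-winding factors $|n^{(i)}|^{O(l)}$ at every level. Second, one must correctly locate where the final factor $\delta$ enters: after eliminating the shifts $n^{(J-1)}, \ldots, n^{(2)}$ one is left with a semi-algebraic set $\widetilde{\mathcal{S}} \subset [0,1]^{2l}$ (the $\omega$-block and the $n^{(1)}\omega$-block) of degree $B^{C}$ and small measure, and the last application — eliminating $n^{(1)}$ — is where transversality yields $\epsilon^{-1} \widetilde\eta^{1/(2l)}$ with $\epsilon^{-1} \sim \min_s |n^{(1)}_s|$... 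I would instead run this last step so that the gain is precisely $\delta = (\min_{n \in \mathcal{N}^1} \min_s |n_s|)^{-1}$, which is the honest statement: a trajectory $n \mapsto n\omega$ with $n$ large in every coordinate equidistributes, so the measure of $\omega$ for which $(\omega, n\omega)$ hits a small-measure set, uniformly over $n \in \mathcal{N}^1$, is governed by the smallest such $n$ — this is a standard equidistribution/Fubini argument once transversality has been arranged, and I would present it as the base case rather than folding it into the generic inductive step. Everything else (the quantitative choices of constants, the semi-algebraic degree arithmetic, the integer-linear-map Jacobian absorption) is routine given Lemmas \ref{svl}, \ref{projl} and the arithmetic separation built into $\mathcal{N}^1, \ldots, \mathcal{N}^{J-1}$.
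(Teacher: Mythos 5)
Your plan has the right geometric ingredients (scale separation makes the orbit planes transversal, Lemma \ref{projl} along those planes, the loss $\delta$ generated at the first shift), but two of its central steps are, respectively, wrong as written and missing. First, the disposal of the $\mathcal{S}_1$ part is invalid for $J\ge 3$. You argue that $\{\omega:(\omega,n^{(1)}\omega,\dots,n^{(J-1)}\omega)\in\mathcal{S}_1\}$ is small because it lies in the preimage of $\mathrm{Proj}_{[0,1]^{l(J-1)}}\mathcal{S}_1$ under $\omega\mapsto(\omega,n^{(1)}\omega,\dots,n^{(J-2)}\omega)$, the projection has measure $\le B^{C}\epsilon$, and the map is ``measure-nonincreasing'' up to a Jacobian. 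But this map sends $[0,1]^{l}$ onto an $l$-dimensional graph, a Lebesgue-null subset of $[0,1]^{l(J-1)}$, so the ambient measure of $\mathrm{Proj}\,\mathcal{S}_1$ gives no control whatsoever on the $\omega$-measure of its preimage: e.g.\ the slab $\{(\omega,y^{(1)},\dots,y^{(J-2)}):\ \mathrm{dist}(y^{(1)}-n^{(1)}\omega,\Z^{l})<\kappa\}$ has ambient measure $O(\kappa^{l})$ yet every $\omega$ lies in its preimage. The containment ``bad $\omega\subset$ projection'' is legitimate only at the bottom level $J=2$, where the first block is $\omega$ itself (and that is indeed where $B^{C}\delta$ comes from). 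For $J\ge 3$ the small-projection piece is exactly the part that must be fed back into the inductive hypothesis at level $J-1$, with $B^{C}\epsilon$ playing the role of $\eta$, degree $B^{C}$, and \eqref{neta} re-verified at the lower level (this is where the separation between $\mathcal{N}^{J-2}$ and $\mathcal{N}^{J-1}$ gets spent); instead you attach the iteration to the $\mathcal{S}_2$ part, where it is not needed, so the induction as described does not close.

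Second, the existential quantifiers over $n^{(i)}\in\mathcal{N}^{i}$ are never paid for. Your transversality estimate is a per-tuple, per-sheet bound; to conclude you must sum over all sheets and all tuples and show that $\prod_i|\mathcal{N}^{i}|$ times the winding factor (which, note, is $\prod_{i\le J-2}|n^{(i)}|^{O(l)}$ and is controlled by the growth condition at intermediate levels, not by \eqref{neta}) is beaten by the gain $\epsilon^{-1}\eta^{1/(lJ)}$, with consistent choices of the $\epsilon$'s down the tower; this is precisely where $C(J,l)$, the growth condition and \eqref{neta} are consumed, and ``a suitable $\eta'$, and I would iterate'' does not establish it. This is also where your route diverges from the paper: the paper's proof is Bourgain's proof of Lemma 1.20 of \cite{gafa}, in which Lemma \ref{svl} (the $2^{r}$-fold intersection elimination) is the device that handles the quantifier over the $n$'s, used together with Lemma \ref{projl}; in your proposal Lemma \ref{svl} is name-checked but never used. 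A brute-force union bound may well be absorbable here, since $|\mathcal{N}^{i}|$ is polynomial in $\max|n|$ while the gains are high powers of the reciprocal scales, but then that quantitative accounting is the actual content of the proof and must be written out. Until the $\mathcal{S}_1$ step is repaired (via the inductive hypothesis) and the quantifier/union-bound arithmetic is supplied (or Lemma \ref{svl} is genuinely invoked in its place), the proposal is an outline with its two key steps incorrect or missing.
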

\begin{proof}
 The proof follows  from Lemmas \ref{svl} and \ref{projl} just as  the
 proof of  Lemma 1.20 in \cite{gafa}  follows from the corresponding
 Lemma  1.18  and property (1.5) of semi-algebraic sets in \cite{gafa}.
\end{proof}
\begin{defn}
We say $(E,x)$ is $(\bar{\rho},N)$ good, if for any $Q_N\in\mathcal{E}_N^{0}$,
\begin{eqnarray}
\label{ldt1}&& \|G_{Q_N}(E;x)\|\leq e^{\sqrt{N}},\\
\label{ldt2}&& |G_{Q_N}(E;x)(n,n')|\leq  e^{-\bar{\rho}|n-n'|}\  {\mathrm{for} \ |n-n'|\geq \frac{N}{10}}.
\end{eqnarray}
\end{defn}
\begin{defn}
We say Green's function  satisfies property $P$ with parameters  $(\gamma,\bar{\rho})$ at size $N$ if
 there is a semi-algebraic set $\Omega_N=\Omega_N(\lambda v,\rho,b,d) \subset \mathbb{T}^b$ with  $\deg(\Omega_N)\leq N^{4d}$ such that the following statement is true:
  for any   $\omega\in\Omega_N$ and $ E\in\mathbb{R}$,   there exists a  set $X_N=X_N(\lambda v,\rho,b,d,\omega,E)\subset \mathbb{T}^b$  such that
 \begin{equation}\label{Gstar}
 \sup_{1\leq j\leq d,x_j^\neg\in \T^{b-b_j}}\mathrm{mes}( X_N(x_j^\neg))\leq e^{-N^{\gamma}},\end{equation}
and for any $x$ not in $  X_N$, $(E,x)$ is $(\bar{\rho},N)$ good.
\end{defn}

\begin{thm}\label{mul}
There exist small positive constants $c_3<c_4<1$, where $c_3$ and $c_4$ depend on $b,d$ such that the following statements are true.
Let $c_1=\frac{c_3}{4\tilde{b}}$ and $c_2=c_1^2/2$.  Fix a large number $N_1$. Let $N_2= N_1^{2/c_1}$ and $N_3=  e^{N_2^{c_2}}$.
Suppose the Green's functions satisfy property P at size $N_1$  with parameters  $(c_1,\bar{\rho})$, and corresponding semi-algebraic sets $\Omega_{N_1}$.
 Then there exists a semi-algebraic set $\Omega_{3} \subset \Omega_{{N}_1}$ with  $\deg(\Omega_{ 3})\leq {N}_3^{4d}$ and
$\mathrm{mes}((\Omega_{{N}_1}\backslash\Omega_{3})\leq {N_3}^{-c_3}$
such that, if $\omega\in\Omega_{3}$, then for any $E\in \R$ and $x\in \T^b$, there exists  ${N}_3^{c_3}< N< {N}_3^{c_4}$ such that,
for all  $k\in \Lambda \backslash \bar\Lambda$,  $ x+k\omega \mod \Z^b\notin X_{N_1}$, where
\begin{equation*}
  \Lambda=[-N,N]^d,\bar\Lambda=[-N^{\frac{1}{10d}},N^{\frac{1}{10d}}]^{d}.
\end{equation*}

\end{thm}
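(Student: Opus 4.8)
\emph{Plan of the proof.} The statement is a measure and complexity estimate for an exceptional set of frequencies, so the plan is as follows. Let $\mathcal B$ be the set of $\omega\in\Omega_{N_1}$ for which there exist $E\in\R$ and $x\in\T^b$ such that \emph{every} scale $N$ in the window $(N_3^{c_3},N_3^{c_4})$ is ``bad'', in the sense that the annulus $[-N,N]^d\setminus[-N^{\frac1{10d}},N^{\frac1{10d}}]^d$ contains a point $k$ with $x+k\omega\bmod\Z^b\in X_{N_1}(\omega,E)$. I would produce a semi-algebraic set $\mathcal E\subset\T^b$ with $\mathcal B\subset\mathcal E$, $\mathrm{mes}(\mathcal E)\le N_3^{-c_3}$ and $\deg(\mathcal E)$ a small power of $N_3$, and then set $\Omega_3:=\Omega_{N_1}\setminus\mathcal E$; this is semi-algebraic with $\deg(\Omega_3)\le\deg(\Omega_{N_1})+\deg(\mathcal E)\le N_3^{4d}$, has $\mathrm{mes}(\Omega_{N_1}\setminus\Omega_3)\le N_3^{-c_3}$, and $\omega\in\Omega_3$ forces $\omega\notin\mathcal B$, which is precisely the asserted conclusion. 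Two preliminary remarks: the relations $c_1=c_3/(4\tilde b)$, $c_2=c_1^2/2$, $N_2=N_1^{2/c_1}$ give $N_2^{c_2}=N_1^{c_1}$, hence $N_3=e^{N_1^{c_1}}$, so the scale-$N_1$ hypothesis \eqref{Gstar} reads $\mathrm{mes}(X_{N_1}(x_j^\neg))\le N_3^{-1}$ for $\omega\in\Omega_{N_1}$; and, after the routine truncation of the entries of $\lambda^{-1}H_{Q_{N_1}}$ to $\lesssim N_1$ Fourier modes --- an $e^{-\rho N_1}$ perturbation, negligible against the thresholds in \eqref{ldt1}--\eqref{ldt2} after multiplication by $\|G_{Q_{N_1}}\|^2(2N_1+1)^{2d}$ --- we may and do take $X_{N_1}(\omega,E)$ semi-algebraic of degree $\le N_1^{C_0}$, $C_0=C_0(b,d)$.

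To construct $\mathcal E$, fix $J=J(b,d)$ large and a chain $N_3^{c_3}<N^{(1)}<\cdots<N^{(J)}<N_3^{c_4}$ with $N^{(i+1)}=(N^{(i)})^{10dC}$, where $C$ is the elimination constant of Lemmas \ref{svl} and \ref{rmf}; this makes consecutive annuli disjoint and consecutive ``bad times'' polynomially separated, and fits inside the window provided $c_3$ is taken small compared with $c_4$ (which is taken with $c_4 d<1$). If $\omega\in\mathcal B$, applying the bad-scale property at $N^{(1)},\dots,N^{(J)}$ yields $E$, $x$ and bad times $k^{(i)}$ in the $N^{(i)}$-annulus with $x+k^{(i)}\omega\in X_{N_1}(\omega,E)$. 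Writing $u:=x+k^{(1)}\omega$ and $z^{(i)}:=(k^{(i)}-k^{(1)})\omega$ ($\Z^d$ acting blockwise as in \eqref{op}), this becomes $u\in X_{N_1}(\omega,E)\cap\bigcap_{i=2}^{J}\big(X_{N_1}(\omega,E)-z^{(i)}\big)$. Projecting out the base point $u$ and the energy $E$ gives a semi-algebraic $\mathcal S\subset[0,1]^{bJ}$ of degree $\le N_1^{C_1}$ with $(\omega,z^{(2)},\dots,z^{(J)})\in\mathcal S$, and $\mathcal S$ has suitably small measure by a blockwise application of Lemma \ref{svl} --- this is where the small $x_j^\neg$-sections are used and why $J$ has to be large. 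Hence $\mathcal B\subset\mathcal E:=\{\omega:\exists\,n^{(i)}\in\mathcal N^i \text{ with } (\omega,n^{(1)}\omega,\dots,n^{(J-1)}\omega)\in\mathcal S\}$, $\mathcal N^{i-1}$ ranging over the admissible differences $k^{(i)}-k^{(1)}$, and Lemma \ref{rmf} (with $r=d$; the scale separation furnishes its growth hypothesis and \eqref{neta}) gives $\mathrm{mes}(\mathcal E)\le N_1^{C}\delta$ with $\delta^{-1}$ a fixed positive power of $N^{(1)}\ge N_3^{c_3}$, hence $\mathrm{mes}(\mathcal E)\le N_3^{-c_3}$ after the final adjustment of the constants.

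The step I expect to be the main obstacle --- and where \cite{gafa} is, I believe, incomplete --- is that the shift $k\mapsto k\omega=(k_1\omega^1,\dots,k_d\omega^d)$ degenerates in the $j$-th block whenever $k_j=0$, while the annulus $[-N,N]^d\setminus[-N^{\frac1{10d}},N^{\frac1{10d}}]^d$ genuinely contains lattice points near the coordinate hyperplanes; for such a $k^{(1)}$, or such a difference $k^{(i)}-k^{(1)}$, one has $\min_{1\le s\le d}|n_s|=0$, so Lemma \ref{rmf} is vacuous. To deal with this I would, before invoking Lemma \ref{rmf}, pigeonhole over the $\le 2^d$ patterns $L\subset\{1,\dots,d\}$ of those coordinates of the $k^{(i)}$ exceeding $(N^{(i)})^{\frac1{10d}}$, retain a pattern $L^\ast$ common to $\ge J/2^d$ of the scales, freeze the boundedly many (at most $(N^{(1)})^{Cd}$) values of the remaining small coordinates, and run the elimination and Lemma \ref{rmf} only along the $L^\ast$-blocks of $\omega$, with the other blocks of $\omega$, the point $u$, the energy $E$ and the frozen coordinates treated as parameters that are integrated out (those ranging over compact tori) or existentially removed ($E$) at the end; then $\min_{s\in L^\ast}|n_s|\ge(N^{(1)})^{\frac1{10d}}$ really is a power of $N_3$. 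One must check that the blockwise non-degeneracy of $v$ still supplies small sections in the $L^\ast$-directions (it does, as \eqref{Gstar} is a statement about every block) and carry the degree and section-measure bounds through the decomposition; this bookkeeping, together with reconciling it with the order in which $J$, then $c_4$, then $c_3$ (and hence $c_1,c_2$) are chosen, is the delicate part.
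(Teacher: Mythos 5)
Your overall skeleton --- reduce Theorem \ref{mul} to a measure/degree bound for an exceptional frequency set, run a polynomially separated chain of scales inside the window $(N_3^{c_3},N_3^{c_4})$, eliminate the common witnesses $(x,E)$ with Lemma \ref{svl}, and then estimate the frequency measure with Lemma \ref{rmf}, using $N_2^{c_2}=N_1^{c_1}$, hence $N_3=e^{N_1^{c_1}}$ and $\eta\sim N_3^{-1}$ from \eqref{Gstar}, plus the standard truncation making $X_{N_1}$ semi-algebraic --- is exactly the route the paper intends: the paper does not reprove this statement but refers to the Claim in \cite{gafa} together with a notation dictionary, and Lemmas \ref{svl} and \ref{rmf} are precisely the cited ingredients. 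The difficulty is the step you yourself single out as the main obstacle (bad sites $k$ in the annulus with some coordinates $k_j$ small, where the hypotheses of Lemma \ref{rmf} degenerate): your proposed repair does not close it.

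Concretely: (i) ``freezing'' the small coordinates is a union bound over their integer values. The steepness hypothesis of Lemma \ref{rmf} forces consecutive retained scales to satisfy $N^{(t+1)}\geq (BN^{(t)})^{10dC}$, so if $L^\ast\neq\{1,\dots,d\}$ the small coordinates at the largest retained scale alone already range over at least $(N^{(t_{\max})})^{1/(10d)}\geq (N^{(t_{\min})})^{1/(10d)}=\delta^{-1}$ values, i.e.\ at least as many as the entire gain $B^{C}\delta$ that Lemma \ref{rmf} produces (in fact vastly more, since $N^{(t_{\max})}\geq (N^{(t_{\min})})^{(10dC)^{J'-1}}$ with $J'$ the number of retained scales); the resulting bound on $\mathrm{mes}(\mathcal E)$ is $\geq 1$, not $N_3^{-c_3}$. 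Your count ``at most $(N^{(1)})^{Cd}$ frozen values'' is only true with $C$ of order $(10dC)^{J}$, which does not change this comparison. (ii) The alternative reading of your sentence --- treating the non-$L^\ast$ components of the phases $x+k^{(i)}\omega$ as parameters to be ``integrated out or existentially removed'' --- replaces $X_{N_1}$ by its projection onto the $L^\ast$-blocks, and \eqref{Gstar} controls sections, not projections: a set all of whose block-sections have measure $e^{-N_1^{c_1}}$ can project onto all of $\T^{\sum_{j\in L^\ast}b_j}$, so the section-smallness hypothesis of Lemma \ref{svl} is lost and no measure bound for your $\mathcal S$ survives. These per-scale components also differ from scale to scale, so they cannot be folded into the common eliminated witness $(u,E)$ either. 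Thus the degenerate configurations you correctly flag are not handled by the proposal; as written, the argument yields the conclusion only in the unproblematic case $L^\ast=\{1,\dots,d\}$, and a correct treatment must control sites near the coordinate hyperplanes without enumerating their small coordinates and without projecting them out. (The remaining items you defer --- the joint choice of $J$, $c_4$, $c_3$, and the degree bound $\deg(\Omega_3)\leq N_3^{4d}$ for a set defined by quantification over lattice points up to $N_3^{c_4}$ --- are indeed routine once that step is in place, but they depend on it.)
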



\begin{proof}
The proof is based on Lemmas \ref{svl} and \ref{rmf}. For details, we refer the reader to the proof of the \textsc{Claim} in  \cite[p.694]{gafa}.
To make it easier to check the corresponding relation between  Theorem \ref{mul} and Claim in \cite{gafa}, we present the alignment of our notations with these of \cite{gafa}.
Let  $X(B)$ denote the notation $X$ used  in  \cite{gafa}.
\begin{enumerate}
\item $\tilde{b}(B)=1$ since $b_i(B)=1$, $i=1,2,\cdots, d$.
\item $c_1=c_1(B)$, $c_2=c_2(B)$, $c_3=c_5(B)$ and $c_4=c_4(B)$. The formula before (2.8)  in \cite{gafa} gives  the relation between
$ c_1$ and $c_2$. The relation between   $ c_1$ and $c_3$ is presented at the end of  Section 2 in \cite{gafa}.
  \item  $N_1=N_2(B)$, $N_3=\bar{N}(B)$  and ${N}_3^{c_3}=\bar{\bar{N}}(B)$. See (2.8), (2.11) and (2.24) in \cite{gafa} for the corresponding relations.
  \item  $\Omega_{3}=\Omega_{\bar{N}}(B)$. See (2.25) in \cite{gafa} .
\end{enumerate}
\end{proof}


\section{Resolvent identities and Cartan's Lemma}

Let $\Lambda_1,\Lambda_2\subset \Z^d$ and $\Lambda_1\cap\Lambda_2=\emptyset$. Let $\Lambda=\Lambda_1\cup \Lambda_2$.
 Suppose that $ R_{\Lambda}(\lambda^{-1}{H}(x)-E)R_{\Lambda}$ and $ R_{\Lambda_i}(\lambda^{-1}{H}(x)-E)R_{\Lambda_i}$, $i=1,2$ are invertible.
 Then
 \begin{equation*}
   G_{\Lambda}=G_{\Lambda_1}+G_{\Lambda_2}-{\lambda}^{-1}(G_{\Lambda_1}+G_{\Lambda_2})( {H}_{\Lambda}-{H}_{\Lambda_1}-{H}_{\Lambda_2})G_{\Lambda}.
 \end{equation*}
 If $m\in \Lambda_1$ and $n\in \Lambda$, we have
 \begin{equation}\label{Greso}
    |G_{\Lambda}(m,n)|\leq |G_{\Lambda_1}(m,n)|\chi_{\Lambda_1}(n)+{\lambda}^{-1}\sum_{n^{\prime}\in \Lambda_1,n^{\prime\prime}\in \Lambda_2} e^{-\rho|n^{\prime}-n^{\prime\prime}|}|G_{\Lambda_1}(m,n^{\prime})||G_{\Lambda}(n^{\prime\prime},n)|.
 \end{equation}
 We remind
\begin{lem}[Schur  test]\label{schur}
Suppose $A=A_{ij}$ is a   symmetric matrix. Then
\begin{equation*}
  \|A\|\leq \sup_{i}\sum_{j}|A_{ij}|.
\end{equation*}
\end{lem}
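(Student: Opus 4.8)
The plan is to use the Hilbert--space characterization $\|A\|=\sup_{\|\xi\|=1}\|A\xi\|$ together with Cauchy--Schwarz, exploiting the symmetry of $A$ to make the row-sum and column-sum bounds coincide. Set $R:=\sup_i\sum_j|A_{ij}|$, which we may assume finite (otherwise there is nothing to prove). Since $A$ is symmetric, $|A_{ij}|=|A_{ji}|$, and hence $\sup_j\sum_i|A_{ij}|=R$ as well.

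First I would fix $\xi\in\ell^2$ and estimate $\|A\xi\|^2=\sum_i\bigl|\sum_j A_{ij}\xi_j\bigr|^2$. For each fixed $i$ I would split $|A_{ij}|=|A_{ij}|^{1/2}\cdot|A_{ij}|^{1/2}$ and apply Cauchy--Schwarz in the index $j$ to obtain
\[
\Bigl|\sum_j A_{ij}\xi_j\Bigr|^2\le\Bigl(\sum_j|A_{ij}|\Bigr)\Bigl(\sum_j|A_{ij}|\,|\xi_j|^2\Bigr)\le R\sum_j|A_{ij}|\,|\xi_j|^2.
\]
Then I would sum over $i$, interchange the order of summation (all terms are nonnegative, so this is harmless), and invoke the column-sum bound:
\[
\|A\xi\|^2\le R\sum_j|\xi_j|^2\sum_i|A_{ij}|\le R^2\sum_j|\xi_j|^2=R^2\|\xi\|^2.
\]
Taking square roots and the supremum over unit vectors yields $\|A\|\le R$, which is the claim.

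There is no genuine obstacle here; the one place the symmetry hypothesis is used is the last display, where the inner sum $\sum_i|A_{ij}|$ is recognized as a column sum and bounded by $R=\sup_i\sum_j|A_{ij}|$ via $|A_{ij}|=|A_{ji}|$. Without symmetry the same argument only gives the general Schur bound $\|A\|\le\bigl(\sup_i\sum_j|A_{ij}|\bigr)^{1/2}\bigl(\sup_j\sum_i|A_{ij}|\bigr)^{1/2}$. I would also remark that the estimate is valid for infinite matrices as long as $R<\infty$, which is the form in which it is applied to Green's functions in the sequel. (An alternative one-line proof estimates the bilinear form $|\langle A\xi,\eta\rangle|\le\sum_{i,j}|A_{ij}||\xi_j||\eta_i|\le\tfrac12\sum_{i,j}|A_{ij}|\bigl(|\xi_j|^2+|\eta_i|^2\bigr)\le R$ for unit vectors $\xi,\eta$, using $ab\le\tfrac12(a^2+b^2)$; I would keep the Cauchy--Schwarz version above as the main argument.)
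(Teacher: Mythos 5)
Your argument is correct: the Cauchy--Schwarz splitting $|A_{ij}|=|A_{ij}|^{1/2}\cdot|A_{ij}|^{1/2}$, summation in $i$, and the use of symmetry to identify the column sums $\sum_i|A_{ij}|$ with row sums bounded by $R$ is exactly the standard proof of the Schur test, and it applies verbatim to infinite matrices with $R<\infty$, which is the setting in which the lemma is used for Green's functions. Note that the paper itself offers no proof --- the lemma is merely recalled (``We remind'') as a classical fact --- so there is nothing to compare against; your write-up (and the alternative bilinear-form estimate via $ab\le\tfrac12(a^2+b^2)$ you mention) simply supplies the standard missing details, and the only hypothesis actually needed is the finiteness of both the row-sum and column-sum suprema, with symmetry serving only to make these coincide.
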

We now prove

\begin{lem}\label{res1}
 Let $ M_0\geq (\log N)^{2},\bar\rho\in [\frac{\rho}{2},\rho]$ and $M_1\leq N$. Let ${\rm diam}(\Lambda)\leq 2N+1$. Suppose that for any $n\in \Lambda $, there exists some  $ W=W(n)\in \mathcal{E}_M$ with
$M_0\leq M\leq M_1$ such that
$n\in W\subset \Lambda$,  ${\rm dist} (n,\Lambda \backslash W)\geq \frac{M}{2}$ and
\begin{eqnarray}
\label{w1}&& \|G_{W(n)}(E;x)\|\leq2 e^{\sqrt{M}},\\
\label{w2}&& |G_{W(n)}(E;x)(n,n')|\leq  2e^{-\bar\rho|n-n'|}\  {\mathrm{for} \ |n-n'|\geq \frac{M}{10}}.
\end{eqnarray}
  We assume further that $N$ is large enough so that
\begin{equation}\label{ml}
\sup_{M_0\leq M\leq M_1}2 \lambda^{-1}e^{\sqrt{M}}(2M+1)^{d}e^{-\frac{3\rho}{20}M}\sum_{j=0}^{\infty}(2j+1)^de^{-\frac{\rho}{2}j}\leq \frac{1}{2}.
\end{equation}
Then
\begin{equation*}
  \|G_{\Lambda}(E;x)\|\leq 4 (2M_1+1)^d e^{\sqrt{M_1}}.
\end{equation*}
\end{lem}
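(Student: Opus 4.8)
The plan is to use the resolvent identity \eqref{Greso} to bootstrap the local estimates \eqref{w1}–\eqref{w2} on the small regions $W(n)\in\mathcal{E}_M$ into a global bound on $\|G_\Lambda\|$. First I would establish pointwise decay for $G_\Lambda(m,n)$ away from the diagonal: fix $m,n\in\Lambda$ and choose $W=W(m)\in\mathcal{E}_M$ with $m\in W\subset\Lambda$ and $\mathrm{dist}(m,\Lambda\setminus W)\geq M/2$. Applying \eqref{Greso} with $\Lambda_1=W$, $\Lambda_2=\Lambda\setminus W$, one splits into the term $|G_W(m,n)|\chi_W(n)$ and a sum over $n'\in W$, $n''\in\Lambda\setminus W$. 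On the first term use \eqref{w2}; on the coupling term, note that $n'\in W$ and $n''\in\Lambda\setminus W$ force $|n'-n''|\geq\mathrm{dist}(m,\Lambda\setminus W)-|m-n'|$, and since $|m-n'|$ is large whenever $|n'-n''|$ is small (i.e. whenever $n'$ is within distance $\sim M/10$ of the boundary of $W$ we have $|m-n'|\geq M/2 - |n'-n''|\geq (M/2)- (\text{small})$, so $|m-n'|\geq M/10$ and \eqref{w2} applies there too). This is exactly the kind of estimate the hypothesis \eqref{ml} is designed to absorb.

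Concretely, iterating \eqref{Greso} along a chain of such regions $W(m), W(m^{(1)}), W(m^{(2)}),\dots$ — each step moving the "active" index a distance $\gtrsim M/2$ toward $n$ — each application of the resolvent identity costs a factor $\lambda^{-1}e^{\sqrt M}$ from \eqref{w1} (the norm bound on the intermediate $G_{W}$), times the geometric sum $\sum_j (2j+1)^d e^{-\rho j/2}$ from summing $e^{-\rho|n'-n''|}$ over the shell of boundary points, times the exponential gain $e^{-\frac{3\rho}{20}M}$ coming from the fact that $|m-n'|\geq\frac{M}{2}-\frac{M}{10}\cdot\frac{?}{}$ — more carefully, from \eqref{w2} one has $|G_W(m,n')|\leq 2e^{-\bar\rho|m-n'|}\leq 2e^{-\frac{\rho}{2}(M/2 - |n'-n''|)}$, and combining with the $e^{-\rho|n'-n''|}$ weight leaves a net $e^{-\frac{\rho}{4}M}$ decay plus a summable tail in $|n'-n''|$. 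Condition \eqref{ml} (with the $(2M+1)^d$ accounting for the number of choices of $n'$ and the factor $2e^{\sqrt M}\lambda^{-1}$ for the cost) guarantees each iteration contracts by at least $\tfrac12$, so the geometric series over the number of steps converges. This yields $|G_\Lambda(m,n)|\leq C e^{-\bar\rho'|m-n|}$-type decay, and in particular boundedness of each matrix entry.

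To pass from entrywise control to the operator-norm bound $\|G_\Lambda\|\leq 4(2M_1+1)^d e^{\sqrt{M_1}}$, I would apply the Schur test (Lemma \ref{schur}): it suffices to bound $\sup_m\sum_n |G_\Lambda(m,n)|$. For each fixed $m$, pick $W=W(m)\in\mathcal{E}_M$ as above; summing \eqref{Greso} over $n\in\Lambda$ gives
\[
\sum_{n\in\Lambda}|G_\Lambda(m,n)| \leq \sum_{n\in W}|G_W(m,n)| + \lambda^{-1}\!\!\sum_{n'\in W,\, n''\in\Lambda\setminus W}\!\! e^{-\rho|n'-n''|}|G_W(m,n')|\sum_{n\in\Lambda}|G_\Lambda(n'',n)|.
\]
The first sum is at most $\|G_W\|_{HS}$-type times the cardinality, but more simply bounded using \eqref{w1} by $(2M+1)^d\cdot 2e^{\sqrt M}\leq (2M_1+1)^d\cdot 2e^{\sqrt{M_1}}$ (crudely $\sum_{n\in W}|G_W(m,n)|\leq |W|^{1/2}\|G_W\|$, or just $|W|\|G_W\|$ — either way dominated). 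Setting $T:=\sup_m\sum_n|G_\Lambda(m,n)|$, the displayed inequality reads $T\leq (2M_1+1)^d\cdot 2e^{\sqrt{M_1}} + (\tfrac12) T$ once one checks, exactly as in the pointwise step, that the coefficient of $T$ on the right is at most $\tfrac12$ by \eqref{ml} — here the relevant bound uses $|G_W(m,n')|\leq 2e^{-\bar\rho|m-n'|}$ for the boundary indices $n'$ (which lie at distance $\geq M/2-|n'-n''|\geq M/10$ from $m$, so \eqref{w2} applies, giving the $e^{-\frac{3\rho}{20}M}$ factor after pairing with $e^{-\rho|n'-n''|}$ and crudely bounding $|G_W(m,n')|$ by $2e^{\sqrt M}$ on the remaining near-diagonal indices — this double-counting is what the constant $\tfrac12$ rather than something sharper in \eqref{ml} allows). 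Solving, $T\leq 4(2M_1+1)^d e^{\sqrt{M_1}}$, which is the claim.

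The main obstacle I anticipate is the bookkeeping at the boundary of $W(m)$: one must split the inner sum over $n'\in W$ into those $n'$ near $\partial W$ (where $|m-n'|\geq M/2$ is large and one gets genuine exponential gain from \eqref{w2}) versus all $n'$, and verify that in every case the product $e^{-\rho|n'-n''|}|G_W(m,n')|$ is controlled with enough room for the geometric sum. The hypothesis $\mathrm{dist}(n,\Lambda\setminus W(n))\geq M/2$ together with $|n-n'|\geq M/10$ in \eqref{w2} is precisely tuned so that every relevant $n'$ satisfies either $|m-n'|\geq M/2 - |n'-n''|$ with $|n'-n''|$ small, or $|n'-n''|$ large; the constant $\frac{3\rho}{20}$ in \eqref{ml} is what one gets from $\bar\rho\geq\rho/2$ after giving up a bit in this split. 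Once this is set up carefully the rest is the Schur-test computation above.
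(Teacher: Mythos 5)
Your final (Schur-test) paragraph is essentially the paper's own proof: apply \eqref{Greso} once at each $m$ with $\Lambda_1=W(m)$, control $|G_{W(m)}(m,n')|$ for all $n'$ by combining \eqref{w1}--\eqref{w2} (the paper merges your two cases into the single bound $|G_{W(m)}(m,n')|\le 2e^{\sqrt{M}+\bar\rho M/10}e^{-\bar\rho|m-n'|}$), use ${\rm dist}(m,\Lambda\setminus W(m))\ge M/2$ together with \eqref{ml} to make the coefficient of $T=\sup_m\sum_n|G_\Lambda(m,n)|$ at most $\tfrac12$, and solve $T\le 2(2M_1+1)^de^{\sqrt{M_1}}+\tfrac12 T$ before invoking Lemma \ref{schur}; so the proposal is correct and takes the same route. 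The chained-iteration discussion of pointwise off-diagonal decay in your middle paragraph is neither needed for nor used in this norm bound (and could not be closed as stated without an a priori bound on $\|G_\Lambda\|$, which is exactly the role of the absorption trick here and of the hypothesis $\|G_\Lambda\|\le e^{\sqrt N}$ in Theorem \ref{res2}, where such decay is actually proved).
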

\begin{proof}
For simplicity, we drop the dependence on $E$ and $x$.
Under the assumption of \eqref{ml}, it is easy to check that for all $M_0\leq M\leq M_1$,
\begin{equation}\label{Large1}
 2 {\lambda}^{-1}(2M+1)^de^{\sqrt{M}+\frac{\rho}{10}M}\sum_{n_2\in \Lambda\atop |n_2-n|\geq \frac{M}{2}}   e^{-\frac{\rho}{2}|n-n_2|}\leq \frac{1}{2}.
\end{equation}

By \eqref{w1} and \eqref{w2},
one has
\begin{equation}\label{Ugood}
   |G_{W(n)}(n,n')|\leq 2e^{\sqrt{M}+\frac{\bar{\rho}}{10}M}e^{-\bar\rho|n-n'|}.
\end{equation}
For each $n\in \Lambda$, applying \eqref{Greso} with $\Lambda_1=W(n)$, one has
\begin{equation}\label{BGSle}
  |G_{\Lambda}(n,n')|\leq |G_{W(n)}(n,n')|\chi_{W(n)}(n')+{\lambda}^{-1}\sum_{n_1\in W(n)\atop n_2\in \Lambda\backslash W(n)} e^{-\rho|n_1-n_2|}|G_{W(n)}(n,n_1)||G_{\Lambda}(n_2,n')|.
\end{equation}
By \eqref{Ugood} and the fact that $|W(n)|\leq (2M+1)^d$, one has
\begin{eqnarray}
  |G_{\Lambda}(n,n')| &\leq&  |G_{W(n)}(n,n')|\chi_{W(n)}(n')+2{\lambda}^{-1}\sum_{n_1\in W(n)\atop n_2\in \Lambda\backslash W(n)} e^{\sqrt{M}+\frac{\bar{\rho}}{10}M}e^{-\bar\rho|n-n_1|} e^{-\rho|n_1-n_2|}|G_{\Lambda}(n_2,n')|\nonumber  \\
  &\leq&|G_{W(n)}(n,n')|\chi_{W(n)}(n')+2{\lambda}^{-1}(2M+1)^de^{\sqrt{M}+\frac{\rho}{10}M}\sum_{n_2\in \Lambda\backslash W(n)}   e^{-\frac{\rho}{2}|n-n_2|}|G_{\Lambda}(n_2,n')| \nonumber\\
  &\leq&|G_{W(n)}(n,n')|\chi_{W(n)}(n')+2{\lambda}^{-1}(2M+1)^de^{\sqrt{M}+\frac{\rho}{10}M}\sum_{n_2\in \Lambda\atop |n_2-n|\geq \frac{M}{2}}   e^{-\frac{\rho}{2}|n-n_2|}|G_{\Lambda}(n_2,n')|
  \label{BGSle1}.
\end{eqnarray}
where the last inequality holds by the assumption ${\rm dist} (n,\Lambda\backslash W(n))\geq \frac{M}{2}$.

Summing over $n'\in \Lambda$ in \eqref{BGSle1} and noticing \eqref{Large1} yields
\begin{eqnarray}
 \sup_{n\in \Lambda}\sum_{n'\in \Lambda} |G_{\Lambda}(n,n')|
  &\leq&2(2M_1+1)^de^{\sqrt{M_1}}+\frac{1}{2}\sup_{n_2\in \Lambda}\sum_{n'\in \Lambda}|G_{\Lambda}(n_2,n')|.
\end{eqnarray}

Now the lemma follows from  Lemma \ref{schur}.

\end{proof}

\begin{thm}\label{res2}
Let ${\rm diam}(\Lambda)\leq 2N+1$ and $ {\rm diam}(\Lambda_1)\leq N^{\frac{1}{2d}}$. Let $ M_0\geq (\log N)^{2},\bar\rho\in [\frac{\rho}{2},\frac{4\rho}{5}]$. Suppose that for any $n\in \Lambda \backslash\Lambda_1$, there exists some  $ W=W(n)\in \mathcal{E}_M$ with
$M\geq M_0$ such that
$n\in W$,  ${\rm dist} (n,\Lambda\backslash \Lambda_1\backslash W)\geq \frac{M}{2}$, $W\subset \Lambda\backslash\Lambda_1$  and
\begin{eqnarray*}
&& \|G_{W}(E;x)\|\leq e^{\sqrt{M}},\\
&& |G_{W}(E;x)(n,n')|\leq  e^{- \bar{\rho}|n-n'|}\  {\mathrm{for} \ |n-n'|\geq \frac{M}{10}}.
\end{eqnarray*}
Suppose that
\begin{equation*}
\|G_{\Lambda}(E;x)\|\leq e^{\sqrt{N} }.
\end{equation*}
Then
\begin{eqnarray*}
 |G_{\Lambda}(E;x)(n,n')|\leq e^{-(\bar{\rho}-\frac{O(1)}{M_0^{1/2}})|n-n'|}\ \mathrm{for}\  |n-n'|\geq\frac{N}{10}.
\end{eqnarray*}
\end{thm}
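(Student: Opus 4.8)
The plan is a resolvent-iteration (``walk expansion'') argument that propagates the off-diagonal exponential decay of the local Green's functions $G_{W(n)}$ from scale $M$ up to scale $N$, closing the estimate with the a priori bound $\|G_\Lambda\|\le e^{\sqrt N}$ whenever a walk is forced through the exceptional set $\Lambda_1$. First fix $n,n'$ with $|n-n'|\ge N/10$. Since $\mathrm{diam}(\Lambda_1)\le N^{1/2d}\ll N/10$, at most one of $n,n'$ lies in $\Lambda_1$, and using $G_\Lambda(n,n')=G_\Lambda(n',n)$ I may assume $n\in\Lambda_0:=\Lambda\setminus\Lambda_1$ with $\mathrm{dist}(n,\Lambda_1)\ge\mathrm{dist}(n',\Lambda_1)$; then, picking points of $\Lambda_1$ nearest to $n$ and $n'$, $|n-n'|\le\mathrm{dist}(n,\Lambda_1)+\mathrm{diam}(\Lambda_1)+\mathrm{dist}(n',\Lambda_1)$, so $\mathrm{dist}(n,\Lambda_1)\ge\frac12(|n-n'|-N^{1/2d})\ge N/22$. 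This is the room I will spend on losses.

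For the basic hop, given $m\in\Lambda_0$ I apply \eqref{Greso} to the pair $W(m),\ \Lambda\setminus W(m)$ and process the sum over $n_1\in W(m)$ by splitting at $|m-n_1|=M/10$: on $|m-n_1|\ge M/10$ use $|G_{W(m)}(m,n_1)|\le e^{-\bar\rho|m-n_1|}$ together with $e^{-\bar\rho|m-n_1|-\rho|n_1-n_2|}\le e^{-(\rho-\bar\rho)|n_1-n_2|}e^{-\bar\rho|m-n_2|}$, the first factor being summable in $n_1$ because $\rho-\bar\rho\ge\rho/5$ (here $\bar\rho\le 4\rho/5$ is used); on $|m-n_1|<M/10$ use $|G_{W(m)}(m,n_1)|\le e^{\sqrt M}$ together with $|n_1-n_2|\ge|m-n_2|-M/10\ge\frac45|m-n_2|$ (valid once $|m-n_2|\ge M/2$), so the kernel alone gives $e^{-\frac45\rho|m-n_2|}\le e^{-\bar\rho|m-n_2|}$. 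The outcome is
\[ |G_\Lambda(m,n')|\le|G_{W(m)}(m,n')|\,\chi_{W(m)}(n')+\sum_{n_2\in\Lambda\setminus W(m)}\kappa_m(n_2)\,|G_\Lambda(n_2,n')|, \]
where for $n_2\in\Lambda_0\setminus W(m)$, so $|m-n_2|\ge M/2\ge M_0/2$, one has $\kappa_m(n_2)\le e^{-(\bar\rho-C_0M_0^{-1/2})|m-n_2|}$ and $\sum_{n_2\in\Lambda_0\setminus W(m)}\kappa_m(n_2)<\frac12$ (for $\lambda$, or $M_0$, large, as in \eqref{ml}). The decisive bookkeeping observation is that the subexponential/polynomial loss $e^{\sqrt M}(2M+1)^d$ incurred at a hop is amortized against a hop length $\ge M/2$, hence costs only $e^{C_0/\sqrt{M_0}}$ per unit of displacement, uniformly, since $M\le N$ (an elementary region of size $M$ inside $\Lambda$ has diameter $\ge 2M$).

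Iterating from $m=n$, one expands $|G_\Lambda(n,n')|$ as a sum over walks in $\Lambda_0$, each of which either (I) terminates at $n'$ via a direct term $|G_{W(m_j)}(m_j,n')|\chi_{W(m_j)}(n')$; (II) hops once into $\Lambda_1$ and is stopped by $\|G_\Lambda\|\le e^{\sqrt N}$; or (III) is truncated, contributing $\le 2^{-T}e^{\sqrt N}\to0$ thanks to the contraction $\sum\kappa_m<\frac12$. Type I gives $\le e^{-(\bar\rho-CM_0^{-1/2})|n-n'|}$ by the standard geometric summation (in the sub-case $n'\in W(m_j)$ with $|m_j-n'|<M_j/10$, where only $|G_{W(m_j)}(m_j,n')|\le e^{\sqrt{M_j}}$ is available, one does not terminate but continues for one further hop). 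For Type II, a walk first entering $\Lambda_1$ at $q'$ carries weight $\le e^{-(\bar\rho-CM_0^{-1/2})|n-q'|}$ with $|n-q'|\ge\mathrm{dist}(n,\Lambda_1)$; since this is not enough by itself, I re-expand $|G_\Lambda(q',n')|=|G_\Lambda(n',q')|$ from the $n'$-side — where, as $q'\in\Lambda_1$ admits no direct term, every walk again reaches $\Lambda_1$, at some $q''$ — obtaining $|G_\Lambda(q',n')|\le|\Lambda_1|\,e^{-(\bar\rho-CM_0^{-1/2})\,\mathrm{dist}(n',\Lambda_1)}\,e^{\sqrt N}+o(1)$. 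Multiplying the two halves and using $\mathrm{dist}(n,\Lambda_1)+\mathrm{dist}(n',\Lambda_1)\ge|n-n'|-N^{1/2d}$, the remaining losses — the $|\Lambda_1|^2\le N$ choices of entry points, the two factors $e^{\sqrt N}$, and the $e^{\bar\rho N^{1/2d}}$ slippage — total $e^{O(1)(\sqrt N+N^{1/2d})}\le e^{O(1)|n-n'|/\sqrt N}\le e^{O(1)|n-n'|/\sqrt{M_0}}$ (using $M_0\le N$), so Type II also obeys the claimed bound; summing the three types finishes the proof.

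The hard part is exactly this $\Lambda_1$ step together with the short-range direct term: every unavoidable non-decaying factor (short-range $G_{W(m_j)}$, or $\|G_\Lambda\|$ on $\Lambda_1$) must be subexponential in $N$ \emph{and} amortizable over a displacement of order $N$, which is precisely what the hypotheses $M_0\ge(\log N)^2$, $\mathrm{diam}(\Lambda_1)\le N^{1/2d}$, $|n-n'|\ge N/10$ (and $M\le N$) are calibrated to guarantee. This is the ``lattice points at the boundary of the elementary regions'' difficulty mentioned in the introduction, and it is here that the modification — routing walks that touch $\Lambda_1$ through a second expansion rather than gluing a chain of intermediate scales — enters.
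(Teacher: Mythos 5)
Your single-hop estimate and the bookkeeping around it are exactly the paper's mechanism: split the $n_1$-sum in \eqref{Greso} at $|m-n_1|=M/10$, amortize the $e^{\sqrt M}(2M+1)^d$ losses over hops of length at least $M/2$, and close with the a priori bound $\|G_\Lambda\|\leq e^{\sqrt N}$ (the paper iterates this bound one-sidedly and invokes $\|G_\Lambda\|\leq e^{\sqrt N}$ as soon as the running site is within $N^{1/2}$ of $n'$, so its only non-amortizable loss is $e^{O(\sqrt N)}$). The genuine gap is in your Type II step. Your kernel bound $\kappa_m(n_2)\leq e^{-(\bar\rho-C_0M_0^{-1/2})|m-n_2|}$ was derived using $|m-n_2|\geq M/2$, which the hypotheses guarantee only for $n_2\in\Lambda\setminus\Lambda_1\setminus W(m)$. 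For the terminal hop into $q'\in\Lambda_1$ there is no separation at all, and the best available bound is
\begin{equation*}
\kappa_m(q')\leq (2M+1)^d\left(e^{-\bar\rho|m-q'|}+e^{\sqrt M}\,e^{-\rho\,\max(|m-q'|-M/10,\,0)}\right),
\end{equation*}
which falls short of the needed $e^{-\bar\rho|m-q'|}$ by a factor as large as $e^{\bar\rho M/10+\sqrt M}$. The hypotheses allow $M=M(m)$ to be of order $N$ at a site close to $\Lambda_1$: e.g.\ for $d=1$, $\Lambda=[-N,N]$, $\Lambda_1=[N-s,N]$ with $s\leq N^{1/2}$, $m=N-s-t$ with $t<M/2$ and $W(m)=[N-s-1-2M,\,N-s-1]$ satisfy ${\rm dist}(m,\Lambda\setminus\Lambda_1\setminus W)=2M+2-t\geq M/2$ with $M$ comparable to $N$. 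Then the loss $e^{\bar\rho M/10}$ can be of size $e^{c\rho N}$, which exceeds the entire target decay $\bar\rho|n-n'|$ when $|n-n'|\sim N/10$ and certainly cannot be hidden in the $O(1)M_0^{-1/2}$ correction. So the asserted weight $e^{-(\bar\rho-CM_0^{-1/2})|n-q'|}$ for a walk first entering $\Lambda_1$ at $q'$ is unjustified as stated. The same issue infects your Type I sub-case: ``continuing for one further hop'' does not dispose of the direct term $G_{W(m_j)}(m_j,n')$, which for $n'\in W(m_j)$, $|m_j-n'|<M_j/10$ is only bounded by $e^{\sqrt{M_j}}$, again with a potential shortfall $\bar\rho M_j/10$ that is harmless only if $M_j$ is small relative to $|n-n'|M_0^{-1/2}$.

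To close Type II you must either record the entry-hop loss explicitly as $e^{O(M+\sqrt M)}$ and add an argument (or a hypothesis) that the scale $M$ carried by sites within distance $M/2$ of $\Lambda_1$ is $o(|n-n'|M_0^{-1/2})$, or treat the entry hop differently. In the paper's actual applications this regime never occurs ($M\equiv N_1\ll N$ when $\Lambda_1\neq\emptyset$, and $\Lambda_1=\emptyset$ in the second application), so your scheme does go through there once the loss is tracked; and, to your credit, your two-sided re-expansion for walks that meet $\Lambda_1$ addresses a scenario (the chain reaching $\Lambda_1$ far from $n'$) that the paper's written one-sided iteration passes over in silence. But as a proof of the theorem under its stated hypotheses, the decisive Type II estimate is missing its justification.
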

\begin{proof}
As usual, we drop the dependence on $E$ and $x$.
Suppose $|n-n^{\prime}|\geq N^{\frac{1}{d}}+1$. Obviously, one of $n$ and $n^{\prime}$ is not in $\Lambda_1$.
By the self-adjointness of Green's functions, we can assume $n\notin \Lambda_1$.

Applying \eqref{Greso} with $\Lambda_1=W=W(n)$, one has
\begin{equation}\label{BGSle}
  |G_{\Lambda}(n,n')|\leq {\lambda}^{-1}\sum_{n_1\in W\atop n_2\in \Lambda\backslash W} e^{-\rho|n_1-n_2|}|G_{W}(n,n_1)||G_{\Lambda}(n_2,n')|.
\end{equation}
It implies (since $\lambda>1$)
\begin{eqnarray}
\nonumber|G_{\Lambda}(n,n')|
&\leq&  \sum_{n_1\in W,|n_1-n|\leq \frac{M}{10}-1\atop n_2\in \Lambda\backslash W} e^{-\rho|n_1-n_2|}|G_{W}(n,n_1)||G_{\Lambda}(n_2,n')|\\
\nonumber&&+ \sum_{n_1\in W,|n_1-n|\geq \frac{M}{10}\atop n_2\in \Lambda\backslash W} e^{-\rho|n_1-n_2|}|G_{W}(n,n_1)||G_{\Lambda}(n_2,n')|\\
&\leq& \nonumber  \sum_{n_1\in W,|n_1-n|\leq \frac{M}{10}-1\atop n_2\in \Lambda\backslash W} e^{\sqrt{M}}e^{-\rho|n_1-n_2|}|G_{\Lambda}(n_2,n')|\\
\nonumber&&+ \sum_{n_1\in W,|n_1-n|\geq \frac{M}{10}\atop n_2\in \Lambda\backslash W} e^{-\rho|n_1-n_2|} e^{-\bar{\rho} |n-n_1|}|G_{\Lambda}(n_2,n')|\\
&\leq& \nonumber \sum_{n_1\in W,|n_1-n|\leq \frac{M}{10}-1\atop n_2\in \Lambda\backslash W} e^{\sqrt{M}}e^{-\bar\rho|n-n_2|}|G_{\Lambda}(n_2,n')|\\
\nonumber&&+ \sum_{n_1\in W,|n_1-n|\geq \frac{M}{10}\atop n_2\in \Lambda\backslash W} e^{-\bar\rho|n-n_2|} |G_{\Lambda}(n_2,n')|\\
\label{rsi8}&\leq&  (2N+1)^{2d}\sup_{n_2\in \Lambda\backslash W}  e^{-(\bar\rho-\frac{O(1)}{\sqrt{M_0}})|n-n_2| }|G_{\Lambda}(n_2,n')|,
\end{eqnarray}
where the third inequality holds because of $\bar{\rho}\leq \frac{4}{5}\rho$ and   $|n-n_2|\geq \frac{M}{2}$.

Iterating \eqref{rsi8} until $|n_2-n'|\leq N^{\frac{1}{2}}$ (but at most $\frac{2|n-n'|}{M_0}$ times), we have $|n-n'|\geq\frac{N}{10}$,
\begin{eqnarray*}
  |G_{\Lambda}(n,n')| &\leq &  (2N+1)^{ \frac{O(|n-n'|)}{M_0}} e^{-(\bar\rho-\frac{O(1)}{\sqrt{M_0}}) (|n-n'|-N^{\frac{1}{2}}) }e^{\sqrt{N}}\\
   &\leq&  e^{-(\bar{\rho}-\frac{O(1)}{M_0^{1/2}})|n-n'|}  .
\end{eqnarray*}

\end{proof}

\begin{lem}[Several variables matrix-valued Cartan estimate]\label{mcl}
Let $T(x)$ be a self-adjoint $N\times N$ matrix function of a parameter $x\in[-\delta,\delta]^{J}$ ($J\in\mathbb{Z}^+$) satisfying the following conditions:
\begin{itemize}
\item[(i)] $T(x)$ is real analytic in $x\in [-\delta,\delta]^{J}$ and has a holomorphic extension to
\begin{equation*}
\mathcal{D}_{\delta,\delta_1}=\left\{x=(x_i)_{1\leq i\leq J}\in\mathbb{C}^{J}: \sup_{1\leq i\leq J}|\Re x_i|\leq\delta,\sup_{1\leq i\leq J}|\Im{x_i}|\leq \delta\right\}
\end{equation*}
satisfying
\begin{equation}\label{mc1}
\sup_{x\in \mathcal{D}_{\delta,\delta}}\|T(x)\|\leq B_1, B_1\geq 1.
\end{equation}
\item[(ii)]  For all $x\in[-\delta,\delta]^{J}$, there is subset $V\subset [1,N]$ with
\begin{equation*}|V|\leq M,\end{equation*}
and
\begin{equation}\label{mc2}
\|(R_{[1,N]\setminus V}T(x)R_{[1,N]\setminus V})^{-1}\|\leq B_2, B_2\geq 1.
\end{equation}
\item[(iii)]
\begin{equation}\label{mc3}
\mathrm{mes}\{x\in[-{\delta}, {\delta}]^{J}: \ \|T^{-1}(x)\|\geq B_3\}\leq 10^{-3J}J^{-J}\delta^J(1+B_1)^{-J}(1+B_2)^{-J}.
\end{equation}
Let
\begin{equation}\label{mc4}
0<\epsilon\leq (1+B_1+B_2)^{-10 M}.
\end{equation}
\end{itemize}
Then
\begin{equation}\label{mc5}
\mathrm{mes}\left\{x\in\left[-{\delta}/{2}, {\delta}/{2}\right]^{J}:\  \|T^{-1}(x)\|\geq \epsilon^{-1}\right\}\leq C\delta^Je^{-c\left(\frac{\log \epsilon^{-1}}{M\log(B_2+B_3)}\right)^{1/J}},
\end{equation}
where $C=C(J,B_1), c=c(J,B_1)>0$.
\end{lem}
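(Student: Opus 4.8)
The plan is to adapt Bourgain's one-variable matrix-valued Cartan estimate from Chapter 14 of \cite{bbook} to the several-variable setting, using the high-dimensional Cartan set (sublevel set) estimates from \cite{gs} in place of the classical one-dimensional Cartan lemma. The first step is a pointwise reduction of $\|T^{-1}(x)\|$ to a scalar quantity: by condition (ii) and the Schur complement / Feshbach formula, invertibility of $T(x)$ is governed by invertibility of the $M\times M$ block
\begin{equation*}
T_V(x)=R_V T(x) R_V - R_V T(x) R_{[1,N]\setminus V}\big(R_{[1,N]\setminus V}T(x)R_{[1,N]\setminus V}\big)^{-1} R_{[1,N]\setminus V}T(x) R_V ,
\end{equation*}
and a standard resolvent-identity bound gives $\|T^{-1}(x)\|\lesssim (1+B_1)^{O(1)}(1+B_2)^{O(1)}\big(1+\|T_V(x)^{-1}\|\big)$, valid wherever the complementary block is invertible (which, by (ii), is everywhere on $[-\delta,\delta]^J$). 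So it suffices to control the measure of $\{\|T_V(x)^{-1}\|\ge \epsilon^{-1}\text{-ish}\}$, where $T_V$ is an $M\times M$ holomorphic matrix function with $\|T_V(x)\|\le B_1(1+B_1)(1+B_2)=:\tilde B_1$ on the polydisk.

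The second step is the determinant trick: set $f(x)=\det T_V(x)$, a holomorphic function of $x$ on $\mathcal{D}_{\delta,\delta}$ with $\sup|f|\le \tilde B_1^{M}$, and observe that $\|T_V(x)^{-1}\|\ge \epsilon^{-1}$ forces $|f(x)|\le \epsilon\,\|T_V(x)\|^{M-1}\le \epsilon\,\tilde B_1^{M-1}$ by Cramer's rule (cofactors are $(M-1)\times(M-1)$ minors). Thus the bad set is contained in a sublevel set $\{|f(x)|\le \epsilon'\}$ with $\epsilon'=\epsilon\,\tilde B_1^{M-1}$, and $\log(\sup|f|/\epsilon') \lesssim M\log(\tilde B_1) + \log\epsilon^{-1}$. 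To run the Cartan estimate we need a lower bound on $\sup|f|$ at \emph{some} point of the smaller polydisk — equivalently, that $f\not\equiv 0$ with a quantitative floor; this is exactly what hypothesis (iii) buys us, since $\|T^{-1}\|\ge B_3$ on a set of measure smaller than the threshold in \eqref{mc3} would be forced if $f$ were too small on too large a set. Concretely, (iii) guarantees there exists $x_0\in[-\delta,\delta]^J$ with $\|T^{-1}(x_0)\|<B_3$, hence $|f(x_0)|\gtrsim (\tilde B_1)^{-M}(B_2+B_3)^{-O(M)}$, giving the needed normalization $\log\frac{\sup_{\mathcal D}|f|}{|f(x_0)|}\lesssim M\log(B_1B_2B_3)$.

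The third step applies the several-variable Cartan/sublevel estimate of \cite{gs}: if $g$ is holomorphic on $\mathcal{D}_{\delta,\delta}\subset\mathbb{C}^J$ with $\sup_{\mathcal D_{\delta,\delta}}|g|\le 1$ and $|g(x_0)|\ge e^{-K}$ for some $x_0$ in the half-size polydisk, then
\begin{equation*}
\mathrm{mes}\big\{x\in[-\delta/2,\delta/2]^J:\ |g(x)|<e^{-H}\big\}\le C\,\delta^J\,e^{-c(H/K)^{1/J}}
\end{equation*}
for $H\gg K$, with $C,c$ depending on $J$. Applying this to $g=f/\sup_{\mathcal D}|f|$ with $K\asymp M\log(B_1B_2B_3)$ and $H\asymp \log\epsilon^{-1}$ (legitimate because \eqref{mc4} makes $H/K$ large), and bounding $\log(B_1B_2B_3)\le \log(B_2+B_3)+\log(1+B_1)$ with $B_1$ absorbed into the constant, yields precisely \eqref{mc5} with the exponent $c\big(\frac{\log\epsilon^{-1}}{M\log(B_2+B_3)}\big)^{1/J}$. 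The main obstacle is the bookkeeping in Step 1--2: one must verify that the complementary block $R_{[1,N]\setminus V}T R_{[1,N]\setminus V}$ is invertible and well-conditioned on the \emph{complex} polydisk (not just on the real cube where (ii) is stated), or else restrict the Cartan argument to a slightly shrunk domain and use analyticity to propagate the bound; this requires a Neumann-series argument comparing $T(x)$ for complex $x$ to its value at a nearby real point, using \eqref{mc1}. Handling this, and tracking that the polynomial-in-$(B_1,B_2)$ losses never escape the constant $C=C(J,B_1)$ nor spoil the $1/J$-power, is where the care is needed; the rest is a direct transcription of Bourgain's scalar argument.
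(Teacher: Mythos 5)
Your overall strategy (Schur complement reduction via (ii), determinant trick, then the Goldstein--Schlag several-variable Cartan estimate) is the same as the paper's, but as written there is a genuine gap in how you set up the function to which Cartan is applied and in how you use hypothesis (iii). First, the set $V$ in (ii) depends on $x$: there is no single $V$ that works on all of $[-\delta,\delta]^J$, so $f(x)=\det T_V(x)$ is \emph{not} a globally defined holomorphic function, and the complementary block $R_{[1,N]\setminus V}T R_{[1,N]\setminus V}$ is only guaranteed invertible (after a Neumann-series perturbation off the real point where (ii) was invoked) on a small complex neighborhood of that point, of radius $\mu\sim \delta\,J^{-1}(1+B_1)^{-1}(1+B_2)^{-1}$. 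Consequently the whole argument must be run locally: fix $x_0\in[-\delta/2,\delta/2]^J$, fix $V=V(x_0)$, work on the polydisk $|z-x_0|<\mu$, and at the end cover $[-\delta/2,\delta/2]^J$ by roughly $(\delta/\mu)^J$ such cubes and sum. This is exactly what the paper does, and your proposal skips it (your closing remark about ``a slightly shrunk domain'' addresses the complexification of (ii) but not the $x$-dependence of $V$ or the covering).

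Second, and relatedly, you use (iii) only to produce \emph{one} point $x_0$ with $\|T^{-1}(x_0)\|<B_3$. That is too weak: the Cartan lemma (Lemma 2.15 of Goldstein--Schlag, as quoted in the paper) requires the lower bound at the \emph{center} of the polydisk on which $\phi=\log|\det S|$ is analytic and bounded, and yields the sublevel-set bound only on the quarter-size polydisk around that center; here the polydisk has radius $\mu$, not $\delta$. So you need a good point within distance $\sim\mu/10$ of \emph{every} $x_0$ in the covering, and this is precisely what the quantitative constant $10^{-3J}J^{-J}\delta^J(1+B_1)^{-J}(1+B_2)^{-J}$ in (iii) is engineered to give (it is essentially $(\mu/10)^J$, so the exceptional set cannot contain any cube of side $\mu/10$). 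With only a single good point, the cited Cartan lemma controls $|\det|$ only near that point; trying to propagate the lower bound across $\sim\delta/\mu\sim J(1+B_1)(1+B_2)$ re-centerings would degrade the exponent by $B_2$-dependent factors and would not produce \eqref{mc5} with $c=c(J,B_1)$. Once you localize to $\mu$-cubes with a fixed $V(x_0)$, extend (ii) to the complex $\mu$-neighborhood by Neumann series, pick $x_1$ with $|x_1-x_0|<\mu/10$ and $\|T^{-1}(x_1)\|\leq B_3$ using the quantitative (iii), rescale by $\mu$, and apply Goldstein--Schlag on each cube before summing, your Steps 2--3 go through as in the paper.
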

\begin{proof}
The proof is similar to that of the case $J=1$ in Chapter 14 of \cite{bbook} (see also Remark 3 there).  We use the higher dimensional Cartan sets techniques of \cite{gs}. For convenience, we give the details in the Appendix.
\end{proof}

%
%

\begin{thm}\label{thmcar}
Under the assumptions of Theorem \ref{mul}, let $\omega\in \Omega_{N_2}\cap \Omega_{N_1}$.
We assume for some $x=(x^j, x_j^\neg)\in\T^b$, there exist $N\in[\frac{1}{4}N_3^{c_3}, N_3^{c_4}]$ and $\bar{\Lambda}\subset \Lambda\in \mathcal{E}_{N}$ with ${\rm diam}(\bar \Lambda)\leq 10N^{\frac{1}{10d}}$ such that,
for any $ k\in  \Lambda \backslash  \bar{\Lambda}$,
there exists some $k\in W\in \mathcal{E}_{N_1}$,$ W\subset \Lambda\backslash  \bar{\Lambda}$ such that ${\rm dist}(k,\Lambda \backslash  \bar{\Lambda}\backslash W)\geq \frac{N_1}{2},$ and $x+k\omega\mod\Z^b\notin X_{N_1}$.
Let
\begin{equation*}
  Y=\{y\in \R^{b_j}: |y-x^j|\leq e^{-\rho N_1},\|G_{\Lambda}(E;(y,x_j^\neg))\|\geq e^{\sqrt{N}}\}.
\end{equation*}
Then
\begin{equation}\label{Apr25}
\mathrm{mes}( Y)\leq e^{-N^{1/3b_j}}.
\end{equation}

\end{thm}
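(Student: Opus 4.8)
The plan is to apply the matrix-valued Cartan estimate, Lemma \ref{mcl}, to the function $T(y) = R_\Lambda(\lambda^{-1}H(y,x_j^\neg) - E)R_\Lambda$ viewed as a function of the single block of variables $y\in\mathbb{T}^{b_j}$ near $x^j$, so here the dimension of the parameter is $J = b_j$. The norm bound \eqref{mc1} with $B_1$ subexponential in $N$ is immediate from boundedness of the spectrum of $\lambda^{-1}H$ (Section 2.1). The crucial resolvent-input hypothesis \eqref{mc2} is supplied by Lemma \ref{res1}: outside the exceptional set $V$ coming from the small box $\bar\Lambda$ (of size $M=|\bar\Lambda|\leq (20N^{1/10d}+1)^d$), every site $k\in\Lambda\setminus\bar\Lambda$ sits in a good elementary region $W\in\mathcal{E}_{N_1}$ with $W\subset\Lambda\setminus\bar\Lambda$ and ${\rm dist}(k,\Lambda\setminus\bar\Lambda\setminus W)\geq N_1/2$, precisely because $x+k\omega\notin X_{N_1}$ and $\omega\in\Omega_{N_1}$ give property $P$. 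Feeding these local Green's function bounds into Lemma \ref{res1} (with $M_0$ of order $(\log N)^2 \le N_1$, $M_1=N_1$) yields $\|(R_{\Lambda\setminus\bar\Lambda}(\lambda^{-1}H-E)R_{\Lambda\setminus\bar\Lambda})^{-1}\|\leq B_2$ with $\log B_2 = O(\sqrt{N_1})$, which is \eqref{mc2}.

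Next I would verify the measure hypothesis \eqref{mc3}. This is where $\omega\in\Omega_{N_2}$ enters. By property $P$ at scale $N_2$ with parameters $(c_1,\bar\rho)$, for the given $E$ there is a set $X_{N_2}$ with $\sup_{j,x_j^\neg}{\rm mes}(X_{N_2}(x_j^\neg))\leq e^{-N_2^{c_1}}$ such that $(E,x)$ is $(\bar\rho,N_2)$-good off $X_{N_2}$; since $\Lambda\in\mathcal{E}_N$ with $N\le N_3^{c_4} = e^{c_4 N_2^{c_2}}\ll N_2$ (after choosing $c_2<c_1$ and using $N\le N_3^{c_4}$ versus $N_2 = N_1^{2/c_1}$ — more precisely one uses that $N$ is polynomially bounded in a quantity much smaller than $N_2$), a standard monotonicity/restriction argument shows $\|G_\Lambda(E;(y,x_j^\neg))\|\leq e^{\sqrt{N_2}}$ whenever $(y,x_j^\neg)\notin X_{N_2}$. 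Hence $\{y\in[-\delta/2,\delta/2]^{b_j}: \|T^{-1}(y)\|\geq e^{\sqrt{N_2}}\}$ has measure at most ${\rm mes}(X_{N_2}(x_j^\neg))\le e^{-N_2^{c_1}}$, which for large $N_1$ is far smaller than the right side of \eqref{mc3} since the latter is only exponentially small in $N_1$ (through $B_1,B_2$) whereas $e^{-N_2^{c_1}} = e^{-N_1^{2}}$. So take $B_3 = e^{\sqrt{N_2}}$ and $\delta = e^{-\rho N_1}$; here one should double-check that the polynomial factor $\delta^{b_j}$ in \eqref{mc3}, being $e^{-O(N_1)}$, is still dominated — it is, because both sides scale the same way in $\delta$ once we track the $(1+B_1)^{-J}(1+B_2)^{-J}$ factors, all of which are merely $e^{-O(\sqrt{N_1})}$ or $e^{-O(N_1)}$, versus the $e^{-N_1^2}$ gain.

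Finally, with $\epsilon^{-1} = e^{\sqrt{N}}$ and the admissibility condition \eqref{mc4} satisfied because $\log\epsilon^{-1} = \sqrt{N}$ while $M\log(1+B_1+B_2) = N^{1/10d+o(1)} \cdot O(\sqrt{N_1})$ — one must check $\sqrt N \le N^{1/10d}\cdot$(small), which holds only for $d=1$ unless we are more careful; in general one uses instead that $10 M\log(1+B_1+B_2) \ll \sqrt N$ because $M \le (20 N^{1/10d}+1)^d \approx N^{1/10}$ and $\log(1+B_1+B_2) = O(\sqrt{N_1}) = O(N^{c_1/2}) $ with $c_1$ tiny, so the product is $\ll N^{1/10+c_1/2}\ll N^{1/2}$ — Lemma \ref{mcl} gives
\begin{equation*}
{\rm mes}(Y) \le C\delta^{b_j} e^{-c\left(\frac{\sqrt{N}}{M\log(B_2+B_3)}\right)^{1/b_j}}.
\end{equation*}
Since $M\log(B_2+B_3) \le N^{1/10}\cdot O(\sqrt{N_2}) = N^{1/10}\cdot e^{O(N_2^{c_2})}$... — here is the real subtlety and the step I expect to be the main obstacle: $B_3 = e^{\sqrt{N_2}}$ is \emph{exponentially} large in a power of $N_1$, hence $\log B_3 \approx \sqrt{N_2}$, and one needs $\sqrt N / (M\sqrt{N_2}) $ to still be a large power of $N$. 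Since $N\ge \tfrac14 N_3^{c_3} = \tfrac14 e^{c_3 N_2^{c_2}}$, we have $\sqrt N \ge e^{(c_3/2)N_2^{c_2}}$, which dominates $M\sqrt{N_2} \le N^{1/10}\sqrt{N_2}$ provided $N^{1/10}\sqrt{N_2} \le \sqrt N^{\,1/2}$, i.e. $N \gtrsim N_2^{10}$ — true since $N\ge e^{(c_3/2)N_2^{c_2}}$. Then $\sqrt N/(M\log(B_2+B_3)) \ge N^{1/3}$ say, so the exponent is at least $N^{1/(3 b_j)}$ up to constants, and after absorbing the harmless $C\delta^{b_j}\le 1$ we obtain ${\rm mes}(Y)\le e^{-N^{1/3 b_j}}$ as claimed, for $N_1$ (hence $N$) large enough. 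The bookkeeping of which scale ($N_1$, $N_2$, or $N_3$) controls which constant in \eqref{mc3}–\eqref{mc5}, and in particular making sure the \emph{exponentially large} $B_3$ does not overwhelm the \emph{polynomially small} target exponent $N^{1/3b_j}$, is the delicate point that must be handled with care.
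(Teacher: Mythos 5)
Your overall skeleton is the same as the paper's: apply the several-variables Cartan estimate (Lemma \ref{mcl}) to $T(y)=\lambda^{-1}H_\Lambda((y,x_j^\neg))-E$ with $J=b_j$, $\delta=e^{-\rho N_1}$, $V=\bar\Lambda$ (so $M\lesssim N^{1/10}$), $B_2=e^{O(\sqrt{N_1})}$ obtained from Lemma \ref{res1} by paving $\Lambda\setminus\bar\Lambda$ with the good $N_1$-boxes, $B_3$ coming from property P at scale $N_2$, and $\epsilon=e^{-\sqrt N}$; your bookkeeping for \eqref{mc4} and for the final exponent $\bigl(\sqrt N/(M\log(B_2+B_3))\bigr)^{1/b_j}\geq N^{1/3b_j}$ agrees with the paper. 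However, your verification of the measure hypothesis \eqref{mc3} contains a genuine gap. You assert that $N\le N_3^{c_4}=e^{c_4N_2^{c_2}}\ll N_2$ and that a ``monotonicity/restriction argument'' upgrades $(\bar\rho,N_2)$-goodness to $\|G_\Lambda\|\le e^{\sqrt{N_2}}$. Both points fail: since $N_3=e^{N_2^{c_2}}$ and $N\ge\frac14N_3^{c_3}$, the scale $N$ is \emph{exponentially larger} than $N_2$, not smaller; and there is no monotonicity of Green's function norms under enlarging the region --- property P at scale $N_2$ only controls $G_{Q_{N_2}}$ for elementary regions of size $N_2$, and says nothing directly about $G_\Lambda$ for $\Lambda$ of size $N\gg N_2$. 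The correct mechanism (the paper's) is a second application of the paving Lemma \ref{res1}, now with $M_0=M_1=N_2$: for $y$ outside the union over all shifts $k$ with $|k|\le N_3$ of the bad sections of $X_{N_2}$ (using \eqref{GGshift}), every site of $\Lambda$ admits a good $N_2$-box, whence $\|T^{-1}(y)\|\le 4e^{2\sqrt{N_2}}=:B_3$ off a set of measure $(2N_3+1)^de^{-N_2^{c_1}}$; one then checks this is below the threshold $10^{-3b_j}b_j^{-b_j}\delta^{b_j}(1+B_1)^{-b_j}(1+B_2)^{-b_j}\ge e^{-N_2^{c_1}/2}$. Your bound ${\rm mes}\le e^{-N_2^{c_1}}$ also silently drops the $(2N_3+1)^d$ union factor (harmless numerically, but it reflects the missing mechanism).

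A secondary omission: condition (ii) of Lemma \ref{mcl} must hold for \emph{all} real $y$ in the $\delta$-cube (and (i) requires analyticity and a uniform bound on a complex neighborhood), whereas the hypothesis of Theorem \ref{thmcar} gives $x+k\omega\notin X_{N_1}$ only at the single phase $x$. The paper bridges this by a perturbation step: since $\delta=e^{-\rho N_1}\ll e^{-2\sqrt{N_1}}$, the $N_1$-scale bounds at $x+k\omega$ persist (with a factor $2$) for all $y$ in the complex $e^{-\rho N_1}$-neighborhood of $x^j$, and only then does Lemma \ref{res1} give $B_2=e^{2\sqrt{N_1}}$ uniformly on the cube. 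You should include this step; it is routine Neumann-series stability, but without it \eqref{mc2} is not verified on the whole parameter cube.
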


\begin{proof}
Without loss of generality, we  assume   $j=1$. Fix $x^1\in \T^{b_1}$ and $x_1^\neg\in\T^{b-b_1}$.



 Let $\mathcal{D}$ be the $e^{-\rho N_1}$ neighbourhood  of $x^1$ in the complex plane, i,e.,
\begin{equation*}
 \mathcal{D}=\{z\in \mathbb{C}^{b_1}:\  |\Im z|\leq e^{-\rho N_1}, |\Re z-x^1|\leq e^{-\rho N_1}\}.
\end{equation*}
By the assumption of Theorem \ref{thmcar}, one has for all $k\in \Lambda\backslash \bar{\Lambda}$ and $Q_{N_1}\in \mathcal{E}_{N_1}^0$,
\begin{eqnarray}
&&\|G_{Q_{N_1}}(E;x+k\omega)\|\leq e^{\sqrt{N_1}},\label{Apr20}\\
&&|G_{Q_{N_1}}(E;x+k\omega)(n,n')|\leq e^{-\bar{\rho}|n-n'|}\ {\rm for}\ |n-n'|\geq\frac{N_1}{10}.\label{Apr21}
\end{eqnarray}
By standard perturbation arguments\footnote{See e.g. the proof of Theorem \ref{thmini}.},  \eqref{Apr20} and \eqref{Apr21}, we have  for any $y\in \mathcal{D}$, $Q_{N_1}\in \mathcal{E}_{N_1}^0$,
and  $k\in\Lambda\backslash\bar\Lambda$,
\begin{eqnarray}
&&\|G_{Q_{N_1}}(E;(x^1+y,x_1^\neg)+k\omega)\|\leq 2e^{\sqrt{N_1}},\label{Apr22}\\
&&|G_{Q_{N_1}}(E;(x^1+y,x_1^\neg)+k\omega)(n,n')|\leq 2e^{-\bar{\rho}|n-n'|}\ {\rm for}\ |n-n'|\geq\frac{N_1}{10}.\label{Apr23}
\end{eqnarray}
 Substituting   $\Lambda$ with $\Lambda\backslash\bar{\Lambda}$ in Lemma \ref{res1}, one has for any $y\in \mathcal{D}$,
\begin{eqnarray}
\label{bs3}\|G_{\Lambda\setminus \bar\Lambda}(E;(x^1+y,x_1^\neg))\|\leq e^{2\sqrt{N_1}}.
\end{eqnarray}

We want to use  Lemma \ref{mcl}.
For this purpose, let  $$T(y)=\lambda^{-1}{H}_{\Lambda}((x^1+y,x_1^\neg))-{E},J=b_1,\delta=e^{-\rho N_1}.$$
Now we are in the position to check the assumptions of Lemma \ref{mcl}.
Obviously, $B_1=O(1)$ since $\lambda>1$ and $E$ is bounded.

Let $V=\bar{\Lambda}$.
By \eqref{bs3},  one has
\begin{eqnarray}\label{mb}
 M=|\bar\Lambda|\leq 30^dN^{{1}/{10}},
B_2=e^{2\sqrt{N_1}}.
\end{eqnarray}

By the fact that  the Green's functions satisfy property P and \eqref{Gstar},
 one has that
both \eqref{ldt1} and \eqref{ldt2} hold at scale $N_2$ for all $y$ except a   set  of $y\in \T^{b_1}$ with measure less than $e^{-{N_2^{c_1}}}$.
It implies both \eqref{ldt1} and \eqref{ldt2} holds at scale $N_2$ for all $x+k\omega$ with $|k| \leq N_3$ except a    set
of measure less than $(2N_3+1)^de^{-{N_2^{c_1}}}$.

Applying Lemma \ref{res1} with $M_0=M_1=N_2$ and \eqref{GGshift}, one has
\begin{eqnarray*}
 \|T^{-1}(y)\|\leq 4(2N_2+1)^de^{\sqrt{N_2}}\leq 4e^{2\sqrt{N_2}}=:B_3,
\end{eqnarray*}
except  on a   set of $y\in \T^{b_1}$
with measure less than $(2N_3+1)^de^{-{N_2^{c_1}}}$.

Since $N_2=N_1^{\frac{2}{c_1}}$,
direct computation shows that
\begin{equation*}
10^{-3b_1}b_1^{-b_1}\delta_1^{b_1}(1+B_1)^{-b_1}(1+B_2)^{-b_1}\geq e^{-{N_2^{c_1}}/{2}}.
\end{equation*}
This verifies (iii) in Lemma \ref{mcl}.

For $\epsilon=e^{-\sqrt{N}}$, by (\ref{mb}), one has
  $$\epsilon<(1+B_1+B_2)^{-10M}.$$

By (\ref{mc5}) of Lemma \ref{mcl},
\begin{equation}\label{y0}
\mathrm{mes}( Y)\leq C e^{-c\left(\frac{\sqrt{N}}{N_2N^{{1}/{10}}}\right)^{1/b_1}}\leq e^{-N^{{1}/{3b_1}}}.
\end{equation}

%
\end{proof}

\begin{thm}\label{mul1}
Let $c_1,c_2,c_3,c_4, N_1,N_2,N_3,\Omega_{3}$ be given by Theorem \ref{mul}, so in particular, Green's functions satisfy property P at $N_1,N_2$ with parameters $(c_1,\bar{\rho})$.  Then for all $N_3\leq N\leq N_{3}^2$, Green's functions satisfy property P at size ${N}$ with parameters $(c_1,\bar{\rho}-\frac{O(1)}{N_1^{1/2}})$ and   $\Omega_{N}=\Omega_{3}\cap\Omega_{N_2}$, where $O(1)$ only depends on $d$.
\end{thm}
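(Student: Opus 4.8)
The plan is to run a deterministic multi-scale step that upgrades property $P$ from the input scales $N_1,N_2$ to an arbitrary scale $N\in[N_3,N_3^2]$, using Theorem \ref{mul} to locate, for each $(E,x)$ and (most) $\omega$, a scale $N$ in the right window at which the translated phases $x+k\omega$ avoid the bad set $X_{N_1}$ off a small central box $\bar\Lambda$; then apply the Cartan-type estimate of Theorem \ref{thmcar} to control $\|G_\Lambda\|$, and finally apply the resolvent/off-diagonal-decay estimate of Theorem \ref{res2}. Concretely: fix $\omega\in\Omega_3\cap\Omega_{N_2}$ (note $\Omega_3\subset\Omega_{N_1}$ and we intersect with $\Omega_{N_2}$, so property $P$ at both $N_1$ and $N_2$ is available, and $\mathrm{mes}(\Omega_{N_1}\setminus\Omega_3)\leq N_3^{-c_3}$ gives the needed measure loss; the degree bound $\deg(\Omega_N)\leq N^{4d}$ follows from $\deg(\Omega_3)\leq N_3^{4d}\leq N^{4d}$). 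For each $(E,x)$, Theorem \ref{mul} produces $N\in(N_3^{c_3},N_3^{c_4})$ — after the harmless adjustment to a genuine elementary region $\Lambda\in\mathcal{E}_N$ with small $\bar\Lambda$, as in Theorem \ref{thmcar} — with $x+k\omega\mod\Z^b\notin X_{N_1}$ for all $k\in\Lambda\setminus\bar\Lambda$, and moreover one can attach to each such $k$ a region $W\in\mathcal{E}_{N_1}$, $W\subset\Lambda\setminus\bar\Lambda$, with $\mathrm{dist}(k,\Lambda\setminus\bar\Lambda\setminus W)\geq N_1/2$ (this is exactly the hypothesis feeding Theorem \ref{thmcar} and Theorem \ref{res2}).

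Next I would define the new bad phase set. Set
\[
X_N(x_j^\neg):=\bigl\{\,x^j\in\T^{b_j}:\ (E,(x^j,x_j^\neg))\ \text{is not}\ (\bar\rho-\tfrac{O(1)}{N_1^{1/2}},N)\ \text{good}\,\bigr\},
\]
and estimate its $b_j$-dimensional section measure. Fix $x_j^\neg$; by Theorem \ref{thmcar}, for each $x^j$ there is an $e^{-\rho N_1}$-disk $Y=Y(x^j)$ around it in $\T^{b_j}$ on which $\|G_\Lambda\|\geq e^{\sqrt N}$, with $\mathrm{mes}(Y)\leq e^{-N^{1/3b_j}}$. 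Covering $\T^{b_j}$ by $\sim e^{\rho b_j N_1}$ such disks and summing gives that the set of $x^j$ with $\|G_\Lambda(E;(x^j,x_j^\neg))\|\geq e^{\sqrt N}$ has measure $\leq e^{\rho b_j N_1}e^{-N^{1/3b_j}}\leq e^{-N^{c_1}}$, since $N\geq N_3^{c_3}=e^{c_3N_2^{c_2}}$ is superpolynomially larger than $N_1$ and $c_1$ is small. For every other $x^j$ we have $\|G_\Lambda(E;(x^j,x_j^\neg))\|\leq e^{\sqrt N}$, which is \eqref{ldt1}, and then Theorem \ref{res2} (with $\Lambda_1=\bar\Lambda$, $M_0=N_1$, using the good $W$'s and the bound on $\|G_\Lambda\|$ just obtained) yields the off-diagonal decay \eqref{ldt2} with the degraded rate $\bar\rho-O(1)/N_1^{1/2}$. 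Thus $(E,(x^j,x_j^\neg))$ is $(\bar\rho-O(1)/N_1^{1/2},N)$ good, so $\mathrm{mes}(X_N(x_j^\neg))\leq e^{-N^{c_1}}$, giving \eqref{Gstar}.

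One subtlety: Theorem \ref{mul} gives a box $\Lambda=[-N,N]^d$, while the definition of "$(\bar\rho,N)$ good" quantifies over all $Q_N\in\mathcal{E}_N^0$; I would handle a general elementary region exactly as in Theorem \ref{thmcar} (which already is stated for $\Lambda\in\mathcal{E}_N$) — the covering/Cartan/resolvent chain above never used that $\Lambda$ is the full cube, only $\mathrm{diam}(\Lambda)\leq 2N+1$ and the existence of the good interior $W$'s, and since the scale $N$ selected by Theorem \ref{mul} depends only on $\omega,E,x$ and not on the choice of elementary region, the same $N$ works uniformly. The measure bound $\mathrm{mes}(X_N(x_j^\neg))\leq e^{-N^{c_1}}$ is then uniform in $1\le j\le d$ and in $x_j^\neg$, which is \eqref{Gstar}. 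I expect the main obstacle to be bookkeeping rather than conceptual: verifying that a single scale $N$ furnished by Theorem \ref{mul} simultaneously serves all elementary regions and all energies in the relevant semi-algebraic elimination, and checking the inequalities $e^{\rho b_j N_1}e^{-N^{1/3b_j}}\leq e^{-N^{c_1}}$ and the degree bound $N^{4d}$ hold throughout $N\in[N_3,N_3^2]$ with the stated choices $N_2=N_1^{2/c_1}$, $N_3=e^{N_2^{c_2}}$, $c_2=c_1^2/2$ — these are the points where the precise scale relations of Theorem \ref{mul} are essential.
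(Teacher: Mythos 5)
There is a genuine gap, and it is a scale conflation. Theorem \ref{mul} furnishes, for each $(E,x)$, a scale lying in $(N_3^{c_3},N_3^{c_4})$, which is strictly \emph{below} the target scales since $c_4<1$, while Theorem \ref{mul1} must produce the bounds \eqref{ldt1}--\eqref{ldt2} for $Q_N\in\mathcal{E}_N^0$ with $N\in[N_3,N_3^2]$. Your argument applies Theorem \ref{thmcar} and Theorem \ref{res2} to the single region $\Lambda$ of size $\sim N_3^{c_3}$--$N_3^{c_4}$ coming out of Theorem \ref{mul} and then declares \eqref{ldt1} verified ``which is \eqref{ldt1}'' --- but the $\Lambda$ you control is not an elementary region of the target size $N$; it is exponentially smaller (in the exponent $c_4<1$) than $Q_N$. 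As written, your proof would at best give goodness at the one $(E,x)$-dependent intermediate scale selected by Theorem \ref{mul}, not at every $N\in[N_3,N_3^2]$, and your closing remark that ``the same $N$ works uniformly'' over elementary regions does not address this, since the issue is the size of the region, not its shape.

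What is missing is the entire gluing/paving step, which is the core of the paper's proof: fix $Q_N\in\mathcal{E}_N^0$ with $N\in[N_3,N_3^2]$, apply Theorem \ref{mul} at \emph{every} translate $x+n\omega$, $n\in Q_N$, to get for each $n$ a region at some scale $\tilde N\in(N_3^{c_3},N_3^{c_4})$ containing $n$; shrink/adjust it to $\Lambda_{\rm new}(n)\subset Q_N$ with ${\rm dist}(n,Q_N\setminus\Lambda_{\rm new})\geq\tilde N/2$ and a small $\bar\Lambda_{\rm new}$ (this is where the boundary cases of Figures 1--2 enter, not a ``harmless adjustment''); control $\|G_{\Lambda_{\rm new}}\|$ by Theorem \ref{thmcar} outside a bad set, taking the union of the bad sets $Y_{\tilde N}(x_j^\neg)$ over \emph{all} $\tilde N$ in the window (the scale depends on $n$ and on $x$), and get decay on $\Lambda_{\rm new}$ from Theorem \ref{res2}; finally, for $x$ outside this union, invoke Lemma \ref{res1} with $M_0=\frac14 N_3^{c_3}$, $M_1=N_3^{c_4}$ and the covering $\{\Lambda_{\rm new}(n)\}_{n\in Q_N}$ to obtain $\|G_{Q_N}\|\leq e^{\sqrt N}$, and then Theorem \ref{res2} once more with $\Lambda_1=\emptyset$ to get \eqref{ldt2} at scale $N$ with rate $\bar\rho-O(1)N_1^{-1/2}$. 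Lemma \ref{res1} never appears in your proposal, yet it is exactly the mechanism (good Green's functions at the intermediate scales plus a subexponential norm bound) that bridges from $N_3^{c_4}$ up to $N\in[N_3,N_3^2]$; without it the conclusion of Theorem \ref{mul1} is not reached.
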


\begin{proof}

We fix ${N}\in[{N}_3,{N}_3^2]$ and $Q_{N}\in\mathcal{E}_{N}^0$.  Let $\omega\in \Omega_{N_3}$.

 For any $n\in Q_{N}$,
replacing $x$ with $x+n\omega$ in Theorem \ref{mul},  there exits     ${N}_3^{c_3}< \bar{N}< {N}_3^{c_4}$ such that,
for all  $k\in (n+ \Lambda)\backslash (n+\bar\Lambda)$,  $ x+k\omega \mod \Z^b\notin X_{N_1}$, where
\begin{equation}\label{Apr3}
  \Lambda=[-\bar{N},\bar{N}]^d,\bar\Lambda=[-\bar{N}^{\frac{1}{10d}},\bar{N}^{\frac{1}{10d}}]^{d},
\end{equation}
and $n+\Lambda$, $n+\bar{\Lambda}$  are the shift of $\Lambda$ and $\bar{\Lambda}$ by $n$.

We are going to possibly shrink the $n+\Lambda$ a little bit so that it is in $  Q_{N}$.
More precisely,
we claim that for any $n\in Q_{N}$, there exist
\begin{equation}\label{Gjuly91}
\frac{1}{4}N_3^{c_3}\leq \tilde{N} \leq N_3^{c_4}
,
\end{equation}
$\Lambda_{\rm new} \in \mathcal{E}_{ \tilde{N}}$ and $\bar{\Lambda}_{\rm new}$, such that
\begin{equation}\label{Apr0}
  \Lambda_{\rm new} \subset \Lambda,   \bar\Lambda\subset \bar\Lambda_{\rm new},
\end{equation}

\begin{equation}\label{Apr1}
 n\in \Lambda_{\rm new} \subset Q_{N}, {\rm dist}(n,  Q_{N}\backslash \Lambda_{\rm new})\geq \frac{ \tilde{N}}{2}
\end{equation}
and
\begin{equation}\label{Apr2}
{\rm Diam}(\bar{\Lambda}_{\rm new}) \leq 4 \tilde{N}^{\frac{1}{10d}}.
\end{equation}
Also  for any $ k\in  \Lambda_{\rm new}\backslash  \bar{\Lambda}_{\rm new}$,
there exists some $\mathcal{E}_{N_1} \ni W \subset \Lambda_{\rm new}\backslash  \bar{\Lambda}_{\rm new}$ such that
\begin{equation}\label{Apr5}
  {\rm dist}(k,\Lambda_{\rm new}\backslash  \bar{\Lambda}_{\rm new}\backslash W)\geq \frac{N_1}{2}.
\end{equation}

We split the proof  into three cases.

Case 1: $n+\Lambda\subset Q_{N}$. In this case, let $\Lambda_{\rm new}=n+\Lambda$ and  $\bar{\Lambda}_{\rm new}=n+\bar{\Lambda}$. See Case 1 of Fig.1.

Case 2:   $(n+\Lambda)\cap (\Z^d \backslash Q_{N})$ is  non-empty and ${\rm dist}(n+\bar{\Lambda},\partial Q_{N})\geq 2N_1$. See Case 2 of Fig.1.
In this case, let $\bar{\Lambda}_{\rm new}=n+\bar{\Lambda}$ (the red square). By shrinking  $n+\Lambda$ a little bit, we can obtain
proper ${\Lambda}_{\rm new}\subset (n+\Lambda)\cap Q_{N}$ satisfying \eqref{Apr1}.  Since ${\rm dist}(n+\bar{\Lambda},\partial Q_{N})\geq 2N_1$, we can also
  guarantee \eqref{Apr5} holds.

\begin{center}
\begin{tikzpicture}[scale=0.618]
\draw[](0,0)rectangle (8,8);



\draw [fill=green](0.5,0.5) rectangle (3.5,3.5);
\draw [fill=red](1.5,1.5) rectangle (2.5,2.5);


\draw [](4.6,1.2) rectangle (8.2,4.8);
\draw [fill=green](5.2,1.8) rectangle (7.6,4.2);
\draw [fill=red](5.9,2.5) rectangle (6.9,3.5);

\draw (2,0.25) node[]   {\bf Case 1};
\draw (6,0.9) node[]   {\bf Case 2};
\draw (6.0,2.2) node[]   {$\Lambda_{\rm new}$};
\draw (4,0) node[below]   {Fig.1};

\end{tikzpicture}
\end{center}
Case 3:   $(n+\Lambda)\cap (\Z^d \backslash Q_{N})$ is non-empty and ${\rm dist}(n+\bar{\Lambda},\partial Q_{N})\leq 2N_1$.
%
In this case,
making    $(n+\bar{\Lambda})\cap Q_N $ possibly larger,  we  obtain $\bar{\Lambda}_{\rm new}\subset Q_N$. We  can also make sure for any $ k\in  Q_N\backslash  \bar{\Lambda}_{\rm new}$,
there exists some $  W\in \mathcal{E}_{N_1}\subset Q_N\backslash  \bar{\Lambda}_{\rm new}$
\begin{equation}\label{Apr13}
  {\rm dist}(k, Q_N\backslash  \bar{\Lambda}_{\rm new}\backslash W)\geq \frac{N_1}{2}.
\end{equation}
See  Fig.2. For B, $\bar{\Lambda}_{\rm new}=(n+\bar{\Lambda})\cap Q_N $ (the red part).
For A and C, $(n+\bar{\Lambda})\cap Q_N$ is the red part, and $\bar{\Lambda}_{\rm new}$ is union of the red part and the blue part.
Shrinking    $n+\Lambda$, we can obtain
proper $ {\Lambda} _{\mathrm{new}}$  satisfying \eqref{Apr1}. This implies  \eqref{Apr5} by \eqref{Apr13}.

\begin{center}
\begin{tikzpicture}[scale=0.618]
\draw[](0,0)rectangle (8,8);

\draw [](5.5,8) rectangle (6.5,8.5);
\draw [fill=green](5.2,6.4) rectangle (6.8,8);
\draw [fill=red](5.5,7.5) rectangle (6.5,8);
\draw [](4.2,6.2) rectangle (7.8,9.8);



\draw [](0.5,6) rectangle (3.5,9);
\draw [fill=green](1.2,8) rectangle (2.8,6.6);
\draw [fill=red](1.7,7.2) rectangle (2.3,7.8);
\draw [fill=blue](1.7,7.8) rectangle (2.3,8);

\draw [](-0.85,-0.85) rectangle (2.15,2.15);
\draw [fill=green](0,0) rectangle (2,2);
\draw [fill=red](0.2,0.2) rectangle (1.1,1.1);
\draw [fill=blue](0,0) rectangle (0.2,1.1);
\draw [fill=blue](0,0) rectangle (1.1,0.2);



\draw (6,5.9) node[]   {\bf B};
\draw (6,7) node[]   {$\Lambda_{\rm new}$};
\draw (1.8,2.5) node[]   {\bf C};
\draw (1.3,1.6) node[]   {$\Lambda_{\rm new}$};

\draw (2,5.7) node[]   {\bf  A};
\draw (1.8,6.9) node[]   {$\Lambda_{\rm new}$};
\draw (4,-1.2) node[]   {Fig.2: Case 3};
\end{tikzpicture}
\end{center}

Fix $x_j^\neg$. Divide $\T^{b_j}$ into $e^{2 b_j\rho N_1}$ cubes of size $ e^{-\rho N_1}$.

Applying Theorem \ref{thmcar} in each cube, (\eqref{Apr0}, \eqref{Apr2} and \eqref{Apr5} ensure we can use Theorem \ref{thmcar}),
 there exists a set $Y_{\tilde{N}} (x_j^\neg)$ such that
\begin{equation}\label{Apr6}
\mathrm{mes}(Y_{\tilde{N}}(x_j^\neg))\leq e^{2 b_j\rho N_1}e^{-\tilde{N}^{\frac{1}{3b_j}}},
\end{equation}
and for $x=(x^j,x_j^\neg)$ with $x^j\notin Y_{\tilde{N}}(x_j^\neg)$,
\begin{equation}\label{Apr7}
\|G_{\Lambda_{\rm new}}(E;(x^j,x_j^\neg))\|\leq e^{\sqrt{\tilde{N}}}.
\end{equation}

Setting $M_0=N_1$, $\Lambda=\Lambda_{\rm new}$ and $ \Lambda_1=\bar{\Lambda}_{\rm new}$ in Theorem \ref{res2} (\eqref{ldt1}, \eqref{ldt2}, \eqref{Apr0}, \eqref{Apr2}, \eqref{Apr5} and
\eqref{Apr7} ensure we can use Theorem \ref{res2}), we have for such $x$,
\begin{equation}\label{Apr8}
  |G_{\Lambda_{\rm new}}(E;x)(n,n')|\leq  e^{- (\bar{\rho}-\frac{O(1)}{N_1^{1/2}})|n-n'|}\  {\mathrm{for} \ |n-n'|\geq \frac{\tilde{N}}{10}}.
\end{equation}

Let
\begin{equation}\label{Apr9}
 B_{N}(x_j^\neg)=  \bigcup_{\frac{1}{4}N_3^{c_3}\leq \tilde{N}\leq N_3^{c_4} }Y_{\tilde{N}}(x_j^\neg).
\end{equation}
By \eqref{Apr6}, \eqref{Apr9} and  since $c_1=c_3/4\tilde{b}$,
one has for any $j$ and $x_j^\neg\in \T^{b-b_j}$,
\begin{equation}\label{Apr12}
\mathrm{mes}( {B}_{{N}}(x_j^\neg))\leq  e^{-{N}^{c_1}}.
\end{equation}
Suppose  $x^j\notin {B}_{{N}}(x_j^\neg)$. Applying $\Lambda=Q_{N}$, $M_0= \frac{1}{4}N_3^{c_3}$ and $M_1=  N_3^{c_4}$
in  Lemma \ref{res1} since $N\in[N_3,N_3^2]$ (\eqref{Gjuly91}, \eqref{Apr1}, \eqref{Apr7} and \eqref{Apr8} ensure the assumption of Lemma  \ref{res1}),
 one has
\begin{equation}\label{Apr10}
\|G_{Q_N}(E;x)\|\leq 4(2 N_3^{c_4}+1)^de^{\sqrt{N_3^{c_4}}}\leq e^{\sqrt{N}}.
\end{equation}
Applying  $\Lambda=Q_{N}\in\mathcal{E}_N^{0}$, $M_0= \frac{1}{4}N_3^{c_3}$ and $ \Lambda_1=\emptyset$ in Theorem \ref{res2}, by \eqref{Apr7}, \eqref{Apr8} and \eqref{Apr10},
we have
\begin{equation}\label{Apr11}
  |G_{Q_N}(E;x)(n,n')|\leq  e^{- (\bar{\rho}-\frac{O(1)}{N_1^{1/2}})|n-n'|}\  {\mathrm{for} \ |n-n'|\geq \frac{{N}}{10}}.
\end{equation}

Let
\begin{equation*}
  X_N=\{x\in \T^b:(E,x) \text { is not } (\bar{\rho}-\frac{O(1)}{N_1^{1/2}},N) \text{ good }\}, \Omega_N=\Omega_3\cap \Omega_{N_2}.
\end{equation*}
The theorem follows from \eqref{Apr11}, \eqref{Apr10} and \eqref{Apr12}.
\end{proof}

\section{Large deviation theorem for Green's functions  and proof of Theorem \ref{mthm}}

The main result of this section is the following large deviation theorem (LDT) for Green's functions.
\begin{thm}[LDT]\label{ldt}
There exist constants $ \gamma=\gamma(b,d)\in(0,1)$,  $N_0=N_0(v,\rho,b,d)$ and $ \lambda_0=\lambda_0(v,\rho,b,d)$,
such that  for all $N\geq N_0$ and $\lambda\geq \lambda_0$, the Green's functions satisfy property P with parameters $(\gamma,\frac{\rho}{2})$  at size $N$,
and the corresponding   semi-algebraic set $\Omega_N$ satisfying
\begin{equation*}
   \mathrm{mes}(\T^b\backslash \cap_{N\geq N_0} \Omega_N) \to 0,
 \end{equation*}
 as $\lambda\to \infty$.
\end{thm}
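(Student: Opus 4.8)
The plan is to obtain Theorem~\ref{ldt} from two ingredients: a direct \emph{initial-scale} estimate, valid for large $\lambda$, establishing property $P$ at \emph{every} scale in a window $[N_0,N_*(\lambda)]$ with $N_*(\lambda)\to\infty$ as $\lambda\to\infty$, and the bootstrap packaged in Theorem~\ref{mul1}, which propagates property $P$ from such a window to all larger scales with only summably small losses in the frequency set and in the decay rate. Throughout I take $\gamma:=c_1$; since $c_1=c_3/(4\tilde b)\ge c_3/(4b)$ ultimately depends only on $b,d$, and property $P$ is monotone in its exponent, this is consistent with the statement.

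For the initial scale, write $\lambda^{-1}H(x)-E=\mathrm{diag}\bigl(v(x+n\omega)-E+\lambda^{-1}S(0)\bigr)+\lambda^{-1}\bigl(S-S(0)I\bigr)$. Fix $N$, set $\kappa=e^{-sN^{\gamma}}$ with $s=s(v)$ to be chosen, and work on the event that $\min_{n\in[-N,N]^{d}}|v(x+n\omega)-E|>2\kappa$. There the diagonal of $\lambda^{-1}H-E$ is bounded below by $\kappa$ once $\lambda$ is large, and a Combes--Thomas/Neumann argument (conjugate by $\mathrm{diag}(e^{-t|n-n_0|})$ with $t=\tfrac{9}{10}\rho$, using $|S(n,n')|\le e^{-\rho|n-n'|}$) gives, for every elementary region $Q_N$, that $\|G_{Q_N}\|\le C\kappa^{-1}\le e^{\sqrt N}$ and $|G_{Q_N}(n,n')|\le C\kappa^{-1}e^{-\frac{9}{10}\rho|n-n'|}\le e^{-\frac{4}{5}\rho|n-n'|}$ for $|n-n'|\ge N/10$, provided $\lambda\kappa\ge 4C_{\rho/10}$ and $N\ge N_0$ (the factor $C\kappa^{-1}=Ce^{sN^{\gamma}}$ is harmless because $\gamma<\tfrac12$). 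Hence $(E,x)$ is $(\tfrac45\rho,N)$-good off the set $X_N\subseteq\bigcup_{n\in[-N,N]^d}\{x:|v(x+n\omega)-E|\le 2\kappa\}$. For fixed $x_j^\neg$, the $x^j$-section of $X_N$ is a union of $(2N+1)^{d}$ measure-preserving translates on $\T^{b_j}$ of the sublevel set $\{\theta:|v(x^1,\dots,\theta,\dots,x^d)-E|\le 2\kappa\}$; by the non-degeneracy hypothesis this is a non-constant real-analytic function of $\theta$, so analyticity together with compactness of $\T^{b-b_j}$ (and boundedness of $E$) gives its sublevel sets measure $\le C\kappa^{1/s}$ uniformly in the frozen coordinates, for a suitable $s=s(v)$. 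Thus $\sup_{j,x_j^\neg}\mathrm{mes}(X_N(x_j^\neg))\le (2N+1)^{d}C\kappa^{1/s}\le e^{-N^{\gamma}}$, and with $\Omega_N=\T^{b}$ (trivially semi-algebraic of degree $\le N^{4d}$) we obtain property $P$ with parameters $(\gamma,\tfrac45\rho)$ at every $N$ with $N_0\le N\le N_*(\lambda)\asymp(\log\lambda)^{1/\gamma}$.

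For the bootstrap, recall that Theorem~\ref{mul1} (with Theorem~\ref{mul}) says: property $P$ at the pair $N_1,N_1^{2/c_1}$ with parameters $(c_1,\bar\rho)$ implies property $P$ at every $N\in[N_3,N_3^2]$ with parameters $(c_1,\bar\rho-O(1)N_1^{-1/2})$ and frequency set $\Omega_3\cap\Omega_{N_2}$, where $N_3=e^{N_1^{c_1}}$ and $\mathrm{mes}(\Omega_{N_1}\setminus\Omega_3)\le N_3^{-c_3}$. I would iterate this with seeds near the bottom of the current window: if property $P$ holds at every scale in $[N_0,M]$, apply Theorem~\ref{mul1} with $N_1:=\lfloor(\log M)^{1/c_1}\rfloor$, so that $N_1^{2/c_1}=(\log M)^{2/c_1^2}\le M$ — both input scales lie in the current window — and $N_3=e^{N_1^{c_1}}\le M$, so the new window $[N_3,N_3^2]$ overlaps $[N_0,M]$ and extends coverage to $[N_0,N_3^2]$ with $N_3^2\ge M^{3/2}$. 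Starting from $M_0=N_*(\lambda)$, after $k$ steps property $P$ holds on a window whose top exceeds $M_0^{(3/2)^k}$, hence on all of $[N_0,\infty)$ in the limit. Taking $\Omega_N$ to be the running intersection of the frequency sets produced up to the step that reaches $N$ (with $\Omega_N=\T^b$ on the initial window), $\{\Omega_N\}$ is decreasing; the measure lost at the $k$-th step is $\le (N_3^{(k)})^{-c_3}$ and the decay-rate lost is $O(1)(N_1^{(k)})^{-1/2}$, both geometrically decreasing in $k$, so $\mathrm{mes}\bigl(\T^b\setminus\bigcap_{N\ge N_0}\Omega_N\bigr)\le CN_*(\lambda)^{-c'}\to0$ as $\lambda\to\infty$ for some $c'>0$, while the accumulated rate loss is $<\tfrac{\rho}{10}$ for $\lambda$ large, leaving the parameter $\ge\tfrac45\rho-\tfrac1{10}\rho>\tfrac\rho2$ at every scale. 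This is precisely property $P$ with parameters $(\gamma,\tfrac\rho2)$ at all $N\ge N_0$, together with the stated measure convergence.

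The real content here sits inside Theorem~\ref{mul1}; given it, the present deduction is bookkeeping, and the one point that genuinely needs care is the chaining of the scale-windows. Theorem~\ref{mul1} only validates $[N_3,N_3^2]$, but its hypothesis asks for a pair $N_1,N_1^{2/c_1}$ with the second far larger, and because $c_1<1$ such a pair can never fit inside $[N_3,N_3^2]$, so one cannot re-seed naively inside the freshly produced window. The fix is to take the new seed $N_1\sim(\log M)^{1/c_1}$, logarithmically small compared with the current top scale $M$: then $N_3\approx M$, consecutive windows overlap, and the super-quadratic growth of the window keeps every measure and decay-rate loss geometrically summable. The second point worth isolating is that the initial-scale step uses the \emph{quantitative} form of non-degeneracy — the $\theta$-sublevel sets of $v$ having measure $O(\kappa^{1/s})$ with $C,s$ uniform over the frozen coordinates and over bounded $E$ — which is where real-analyticity, not mere non-constancy, together with compactness of $\T^{b-b_j}$, is essential.
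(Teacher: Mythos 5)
Your proposal is correct and follows essentially the same route as the paper's proof: an initial window of scales handled by diagonal dominance plus a {\L}ojasiewicz-type sublevel-set estimate (the paper's Lemma \ref{loj} and Theorem \ref{thmini}), followed by iteration of Theorem \ref{mul1} with summable losses in the frequency-set measure and in the decay rate, keeping the rate above $\rho/2$. The only deviations are cosmetic bookkeeping: the paper sweeps seeds over all integers in iterated ranges starting at $\log\log\lambda$ so that consecutive windows $[f(N_1),f(N_1)^2]$ overlap, whereas you re-seed once per step at $N_1\sim(\log M)^{1/c_1}$ below the current top scale $M$ (and you use a Combes--Thomas conjugation where the paper expands the Neumann series directly); both schedules yield the same covering of all scales and the same $o(1)$ measure loss as $\lambda\to\infty$.
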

Compactness arguments and  Theorem 8 in \cite{PSS} immediately imply
\begin{lem}[{\L}ojasiewicz  type Lemma]\label{loj}
For $E\in\mathbb{R},\delta>0$, define $$X:=\{x\in\mathbb{T}^b:\  |v(x)-E|<\delta\}.$$ Then there are  constants $C(v),a(v)>0$ such that
\begin{equation}
\sup_{1\leq j\leq d, x_j^\neg\in \T^{b-b_j}}\mathrm{mes}( X(x_j^\neg))\leq C(v)\delta^{a(v)}.
\end{equation}
\end{lem}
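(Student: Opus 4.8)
The plan is to obtain the uniform estimate by combining a quantitative single‑variable {\L}ojasiewicz inequality (the content of Theorem 8 of \cite{PSS}) with an elementary compactness argument in the remaining variables. Fix $j$ and, by symmetry, take $j=1$; write a point of $\T^b$ as $(\theta,y)$ with $\theta=x^1\in\T^{b_1}$ and $y=x_1^\neg\in\T^{b-b_1}$, so that $X(y)=\{\theta\in\T^{b_1}:|v(\theta,y)-E|<\delta\}$. The first step is a trivial reduction to bounded energies: $v$ is continuous on the compact set $\T^b$, hence $|v|\le M_v$ for some $M_v$, and for $|E|>M_v+1$ the set $X$ is empty and the bound holds vacuously; so it suffices to treat $(y,E)$ ranging over the compact set $K:=\T^{b-b_1}\times[-M_v-1,M_v+1]$.

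For each fixed $(y,E)\in K$, the non‑degeneracy hypothesis guarantees that $\theta\mapsto v(\theta,y)$ is a nonconstant real analytic function on $\T^{b_1}$, and therefore so is $\theta\mapsto v(\theta,y)-E$, since subtracting the constant $E$ does not affect nonconstancy. Applying Theorem 8 of \cite{PSS} to this function gives $\mathrm{mes}(X(y))\le C\,\delta^{a}$ for constants $C,a>0$; the only real point is that the $C,a$ it produces depend on $v(\cdot,y)-E$ only through (a) a fixed radius of holomorphic extension together with a uniform sup‑bound on a complex neighbourhood of $\T^{b_1}$, which are uniform over $K$ because $v$ itself extends holomorphically to a fixed complex neighbourhood of $\T^b$ with a uniform bound, and (b) a quantitative nondegeneracy datum, say an integer $m$ and $\kappa>0$ with $\max_{1\le|\alpha|\le m}\|\partial_\theta^{\alpha}v(\cdot,y)\|_{L^\infty(\T^{b_1})}\ge\kappa$. (If Theorem 8 of \cite{PSS} is instead already formulated for continuous families of nonconstant analytic functions on a compact manifold, one simply applies it to the family $\{v(\cdot,y)-E\}_{(y,E)\in K}$, which is continuous in $(y,E)$ in the topology of uniform convergence on that complex neighbourhood, and no further work is needed.)

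It remains to supply the uniform nondegeneracy constants in (b), which is where compactness enters. I claim there are $m\in\mathbb{N}$ and $\kappa>0$ with $\max_{1\le|\alpha|\le m}\|\partial_\theta^{\alpha}v(\cdot,y)\|_{L^\infty(\T^{b_1})}\ge\kappa$ for every $y\in\T^{b-b_1}$. If this failed, then for each $m$ (taking $\kappa=1/m$) there would be $y_m\in\T^{b-b_1}$ with $\|\partial_\theta^{\alpha}v(\cdot,y_m)\|_{L^\infty}<1/m$ for all $1\le|\alpha|\le m$; passing to a subsequence with $y_m\to y_\infty$ and using continuity on $\T^b$ of each fixed derivative $\partial_\theta^{\alpha}v$, we would get $\partial_\theta^{\alpha}v(\cdot,y_\infty)\equiv0$ for all $|\alpha|\ge1$, so $v(\cdot,y_\infty)$ is constant, contradicting non‑degeneracy. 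Finally one takes the largest $C$ and smallest $a$ over the finitely many $j\in\{1,\dots,d\}$.

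The only genuine obstacle I anticipate is bookkeeping rather than conceptual: one must verify that the estimate extracted from \cite{PSS} is genuinely \emph{effective}, i.e. that its constants depend on the analytic function only through the data in (a) and (b) above and not on it in some uncontrolled manner. Granting that, the two compactness inputs — the reduction to bounded $E$ and the uniform order and size of nondegeneracy — close the argument at once, as the text asserts.
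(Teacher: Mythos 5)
Your argument is correct and is essentially the paper's own proof: the paper simply invokes ``compactness arguments and Theorem 8 in \cite{PSS}'', and your write-up (reduction to bounded $E$, slice-wise application of the {\L}ojasiewicz/sublevel-set estimate of \cite{PSS}, and a compactness/contradiction argument giving uniform analyticity and nondegeneracy data over $x_j^\neg$ and $E$) is exactly a filled-in version of that one-line proof. No genuinely different route is taken, so nothing further is needed.
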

\begin{thm}\label{thmini}
Let ${X}$ be as in Lemma \ref{loj} and
\begin{equation}\label{lowdel}
X_{N}:=\bigcup_{|n|\leq {N}}\left\{x:\ x+n{{\omega}}\mod\mathbb{Z}^b\in{X}\right\}.
\end{equation}
Then   we have
\begin{equation}\label{imb}
\sup_{1\leq j\leq d, x_j^\neg\in \T^{b-b_j}}\mathrm{mes}( X_N(x_j^\neg))\leq C(v)(2N+1)^{d}\delta^{a(v)}.
\end{equation}
Moreover, if
\begin{equation}\label{ibc}
\lambda\geq 2\delta^{-1}(2N+1)^{d},
\end{equation}
then for any $x\notin X_{N},\ \omega\in\T^b$, we have for $Q_N\in\mathcal{E}_{N}^0$,
\begin{eqnarray}
&& \|G_{Q_N}(E;x)\|\leq 2\delta^{-1}, \label{numan1}\\
&& |G_{Q_N}(E;x)(n,n')|\leq 2\delta^{-1}e^{-{\rho}|n-n'|} .\label{numan}
\end{eqnarray}
\end{thm}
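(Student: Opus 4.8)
I would prove the two assertions of Theorem~\ref{thmini} separately, both by elementary means. Start with the measure bound \eqref{imb}. Fix $j$ and $x_j^\neg\in\T^{b-b_j}$; unwinding \eqref{lowdel}, the section is
$$X_N(x_j^\neg)=\bigcup_{|n|\le N}\bigl\{x^j\in\T^{b_j}:\ (x^j+n_j\omega^j,\ x_j^\neg+(n\omega)_j^\neg)\mod \Z^b\in X\bigr\},$$
where $(n\omega)_j^\neg$ denotes the $j$-complementary block of $n\omega$. Since $x^j\mapsto x^j+n_j\omega^j$ preserves Lebesgue measure on $\T^{b_j}$, each set in this union is a translate of the $j$-section $X(x_j^\neg+(n\omega)_j^\neg)$, which by Lemma~\ref{loj} has measure at most $C(v)\delta^{a(v)}$. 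As there are at most $(2N+1)^d$ indices $n$ with $|n|\le N$, summing and then taking the supremum over $j$ and $x_j^\neg$ gives \eqref{imb}.

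For the resolvent bounds I would use a Neumann series in $\lambda^{-1}$. Write $\lambda^{-1}H_{Q_N}(x)-E=D+\lambda^{-1}S_{Q_N}$, where $D$ is the diagonal matrix with $D(n,n)=v(x+n\omega)-E$. The hypothesis $x\notin X_N$ forces $|v(x+n\omega)-E|\ge\delta$ for every $n\in Q_N\subset[-N,N]^d$ (as then $|n|\le N$), so $D^{-1}$ exists with $\|D^{-1}\|\le\delta^{-1}$. Combining the crude bound $\|S_{Q_N}\|\le|Q_N|\le(2N+1)^d$ (every entry of $S$ being $\le1$ in modulus) with \eqref{ibc} gives $\|\lambda^{-1}S_{Q_N}D^{-1}\|\le\tfrac{1}{2}$, so $G_{Q_N}=\sum_{k\ge0}(-\lambda^{-1})^k(D^{-1}S_{Q_N})^kD^{-1}$ converges with $\|G_{Q_N}\|\le2\delta^{-1}$, which is \eqref{numan1}. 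For \eqref{numan} I would bound the $(n,n')$-entry termwise: for $k\ge1$ the $k$-th term is a sum over lattice walks $n=b_0,b_1,\dots,b_k=n'$ in $Q_N$ of $\prod_{i=0}^{k}D^{-1}(b_i,b_i)\prod_{i=1}^{k}S(b_{i-1},b_i)$; bounding the diagonal factors by $\delta^{-(k+1)}$, using \eqref{GO}, and extracting $e^{-\rho|n-n'|}$ from $\sum_{i=1}^k|b_{i-1}-b_i|\ge|n-n'|$ with the surviving product $\le1$, this term is at most $\delta^{-(k+1)}(2N+1)^{d(k-1)}e^{-\rho|n-n'|}$. Since \eqref{ibc} makes $(2N+1)^d/(\lambda\delta)\le\tfrac{1}{2}$, summing the geometric series over $k\ge1$ and adding the $k=0$ diagonal term ($\le\delta^{-1}\delta_{nn'}$) yields $|G_{Q_N}(n,n')|\le2\delta^{-1}e^{-\rho|n-n'|}$.

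Since this is an initial (first-scale) estimate, there is no serious obstacle; the one point needing care is that \eqref{numan} is asserted with the \emph{full} rate $\rho$. This works because the walk-counting factor $(2N+1)^{d(k-1)}$ produced at step $k$ is of exactly the magnitude that the smallness $(\lambda\delta)^{-k}$ from \eqref{ibc} absorbs, so none of the exponent has to be spent on making the walk sum converge. I would also record — since this is the form used in the proof of Theorem~\ref{thmcar} — that all these bounds are stable under small complex-analytic perturbations of the phase: replacing $v(x+n\omega)-E$ by $v(x+z+n\omega)-E$ with $|z|$ small only decreases $\delta$ by a bounded factor, and the same Neumann argument then applies on a complex neighbourhood of $x$.
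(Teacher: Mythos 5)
Your proposal is correct and follows essentially the same route as the paper: the measure bound is the union over the at most $(2N+1)^d$ translates combined with Lemma~\ref{loj}, and the Green's function bounds come from a Neumann series about the diagonal part, with \eqref{ibc} providing the contraction and the walk expansion (using \eqref{GO} and the triangle inequality) giving the full off-diagonal rate $\rho$. The only cosmetic differences are that the paper bounds the kinetic part via the Schur test rather than the crude entrywise bound and counts walks slightly more generously, neither of which changes the argument.
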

\begin{proof}
The  bound (\ref{imb}) follows  from Lemma \ref{loj} immediately.

Let $x\notin X_N$ and fix   $Q_N\in\mathcal{E}_{N}^0$. Let $A=\lambda^{-1}R_{Q_N}SR_{Q_N}$, with the kinetic term $S$ being given by \eqref{GO}.
Let $B$ be diagonal part of  the restriction of $\lambda^{-1}H-E$ on $Q_N$, namely, $$B=R_{Q_N}(v(x+n\omega)\delta_{nn'}-E\delta_{nn'})R_{Q_N}.$$
By \eqref{lowdel}, one has
$$\min_{n\in Q_N}\left|v(x+n\omega)-{E}\right|\geq \delta.$$
It leads to
\begin{equation}\label{boub}
\| B^{-1}\| \leq \delta^{-1}.
\end{equation}
Since $|S(n,n')|\leq e^{-\rho|n-n'|}$ for all $n,n'$, by Lemma \ref{schur} again, one has for $N\geq N(\rho,d)$,
\begin{equation}\label{boua}
\|A\|\leq \lambda^{-1}\sup_{n\in Q_N}\sum_{n'\in Q_N}e^{-\rho|n-n'|}\leq\lambda^{-1} (2N+1)^d.
\end{equation}

By (\ref{ibc}),
$$\|AB^{-1}\|\leq \frac{1}{2}.$$
Combining with
\eqref{boub} and \eqref{boua}, we have the following Neumann series expansion
\begin{equation}\label{nsa}
G_{Q_N}= B^{-1}\sum_{s\geq 0} (-AB^{-1})^s.
\end{equation}
Thus one has
\begin{equation}\label{numan2}
\|G_{Q_N}\|\leq ||B^{-1}\| \frac{1}{1-||AB^{-1}\|}\leq 2\delta^{-1}.
\end{equation}
It implies \eqref{numan1}.
In particular,
\eqref{numan} is also true for $n=n'$.

For $n\neq n'$, by \eqref{GO}, \eqref{nsa} and the fact that $B$ is diagonal, we have
\begin{eqnarray*}
 | G_{Q_N}(n,n')| &\leq & \|B^{-1}\| \sum_{s\geq 1\atop |k_i|\leq N}\lambda^{-s}\delta^{-s} e^{-\rho|n-k_1|-\rho|k_1-k_2|-\cdots|k_{s-1}-n'|} \\
   &\leq & \delta^{-1} e^{-\rho|n-n'|} \sum_{s\geq 1} (2N+1)^{sd} \lambda^{-s}\delta^{-s}\\
   &\leq &2 \delta^{-1} e^{-\rho|n-n'|} ,
\end{eqnarray*}
where the last inequality holds by \eqref{ibc}.

\end{proof}

\begin{proof}[\bf Proof of Theorem \ref{ldt}]
Denote  the relation between $N_1$ and ${N}_3$ in Theorem \ref{mul} by $f$, i.e., $f(x)=e^{x^{c_1}}$. Let $N_0=N_0(v,\rho,b,d)$ be sufficiently large. Denote by $f^{(n)}(x)$  the $n$th iteration of $f$, namely, $f^{(n)}(x)=f(f(f(\cdots x\cdots)))$. Let $g(x)=f^2(x)$.  Clearly, $g(x)\geq f(x+1)$ for large $x$.

By letting  $\delta=\frac{1}{2}e^{ -\bar{N}^{1/2}}$
and  Theorem \ref{thmini}, since $c_1<1/2$,
  the Green's functions satisfy property P with parameters $(c_1,\frac{4\rho}{5})$    for   $N_0\leq N\leq \bar{N}$ and     $\Omega_N= \T^b$
if  $  \lambda \geq 4 e^{\bar{N}^{1/2}} (2\bar{N}+1)^{d}$.

  Theorem \ref{mul1} allows us to proceed from scales $N$, $N^{\frac{2}{c_1}}$ to scales $[f(N),g(N)]$. Since we want to cover all scales, our initial step will consist of property P at the interval of  scales $[N_1,f(N_1)]$. For this reason, we need to take $N_1=\log\log \lambda$.

Initial step:  For large $\lambda$,  the  Green's functions satisfy property P with parameters $(c_1,\rho_0)$    for  all $N_0\leq N\leq  g(\log\log \lambda)$ and     $\Omega_N= \T^b$,
where $\rho_0=\frac{4\rho}{5}$.

Let
\begin{equation}\label{derhoi}
\rho_i=\frac{4\rho}{5}-\sum_{j=1}^i\frac{O(1)}{f^{(j)}(\log\log \lambda)^{1/2}}.
\end{equation}

Applying Theorem \ref{mul1} to $N_1=\log\log \lambda,\log\log \lambda+1,\log\log \lambda+2, \cdots ,f(\log\log\lambda)$,
 the  Green's functions satisfy property P with parameters $(c_1,\rho_1)$    for  all $g(\log\log\lambda)\leq N\leq  g(f(\log\log \lambda))$
 since $g(x)\geq f(x+1)$.
 Moreover,
 \begin{eqnarray}
    {\rm mes}\left(\bigcap_{N=\log\log \lambda}^{f(\log\log \lambda)} \Omega_{N}\right) &\geq& 1-\sum _{N=\log\log\lambda}^{f(\log\log \lambda)} \frac{1}{ f(N)^{c_3}}\nonumber \\
    &\geq& 1-\sum _{N=\log\log\lambda}^{f(\log\log \lambda)}  \frac{1}{N^5}.\label{meome}
 \end{eqnarray}

 Applying Theorem \ref{mul1} to $N_1=f(\log\log \lambda),f(\log\log \lambda)+1,f(\log\log \lambda)+2, \cdots ,f^{(2)}(\log\log\lambda)$,
 the  Green's functions satisfy property P with parameters $(c_1,\rho_2)$    for  all $g(f(\log\log \lambda))\leq N\leq  g(f^{(2)}(\log\log \lambda))$.
 Moreover,
 \begin{equation*}
  {\rm mes}\left(\bigcap_{N=f(\log\log \lambda)+1}^{f^{(2)}(\log\log \lambda)} \Omega_{N}\right)\geq 1-\sum _{N=f(\log\log \lambda)+1}^{f^{(2)}(\log\log \lambda)}  \frac{1}{N^5}.
 \end{equation*}

 By induction, we  have
  the  Green's functions satisfy property P with parameters $(c_1,\rho_i)$    for  $g(f^{(i-1)}(\log\log \lambda))\leq N\leq  g(f^{(i)}(\log\log \lambda))$, $i=1,2,\cdots$.
 Moreover,
 \begin{equation}\label{meome1}
  {\rm mes}\left(\bigcap_{N=f^{(i-1)}(\log\log \lambda)+1}^{f^{(i)}(\log\log \lambda)} \Omega_{N}\right)\geq 1-\sum _{N=f^{(i-1)}(\log\log \lambda)+1}^{f^{(i)}(\log\log \lambda)}  \frac{1}{N^5}.
 \end{equation}

 Now Theorem \ref{ldt}  follows from \eqref{derhoi} and \eqref{meome1}.
\end{proof}
\begin{proof}[\bf Proof of Theorem \ref{mthm}]
With Theorem \ref{ldt} at hand, the proof Theorem \ref{mthm} is rather standard. We refer the readers to \cite[Section 3]{bbook} or \cite[Section 6]{bgs} for details.
\end{proof}

\appendix
\section{}
In the following, we will prove the several variables matrix-valued Cartan estimate, i.e., Lemma \ref{mcl}. The proof is similar to that in \cite{bbook,bkick}. Before going to the details, we recall some useful lemmas. The first result is the standard Schur's complement theorem. For convenience, we include a proof here.
\begin{lem}\label{sl}
Let $T$ be the matrix
\begin{equation*}
T=\left(
\begin{array}{cc}
T_1&T_2\\
T_2^t&T_3
\end{array}
\right),
\end{equation*}
where $T_1$ is an invertible $n\times n$ matrix , $T_2$ is an $n\times k$ matrix and $T_3$ is a $k\times k$ matrix. Let
$$S=T_3-T_2^tT_1^{-1} T_2.$$
Then $T$ is invertible if and only if $S$ is invertible, and
\begin{equation}\label{sl1}
\|S^{-1}\|\leq \|T^{-1}\|\leq C(1+\|T_1^{-1}\|)^2(1+\|S^{-1}\|),
\end{equation}
where $C$ depends only on $\|T_2\|$.
\end{lem}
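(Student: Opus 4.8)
The plan is to use the block Gaussian (LDU) factorization of $T$ relative to the invertible corner $T_1$, from which both the invertibility equivalence and the norm bounds drop out by routine operator-norm estimates. Concretely, I would write
\begin{equation*}
T=\begin{pmatrix} I & 0\\ T_2^tT_1^{-1} & I\end{pmatrix}\begin{pmatrix} T_1 & 0\\ 0 & S\end{pmatrix}\begin{pmatrix} I & T_1^{-1}T_2\\ 0 & I\end{pmatrix},
\end{equation*}
which one checks by direct multiplication using $S=T_3-T_2^tT_1^{-1}T_2$. The two triangular factors are unipotent, hence invertible with inverses obtained by flipping the sign of the off-diagonal block, so $T$ is invertible if and only if $\mathrm{diag}(T_1,S)$ is; since $T_1$ is invertible by hypothesis, this is equivalent to invertibility of $S$.

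Next I would record the resulting closed form for the inverse,
\begin{equation*}
T^{-1}=\begin{pmatrix} T_1^{-1}+T_1^{-1}T_2S^{-1}T_2^tT_1^{-1} & -T_1^{-1}T_2S^{-1}\\ -S^{-1}T_2^tT_1^{-1} & S^{-1}\end{pmatrix}.
\end{equation*}
For the lower bound, observe that the bottom-right block of $T^{-1}$ is exactly $S^{-1}$; since compressing a matrix to a coordinate subspace does not increase its operator norm, $\|S^{-1}\|=\|R_k T^{-1}R_k\|\le\|T^{-1}\|$, where $R_k$ is the orthogonal projection onto the second block of coordinates.

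For the upper bound I would estimate each of the four blocks above by the triangle inequality and submultiplicativity: $\|T_1^{-1}+T_1^{-1}T_2S^{-1}T_2^tT_1^{-1}\|\le\|T_1^{-1}\|+\|T_2\|^2\|T_1^{-1}\|^2\|S^{-1}\|$, the two off-diagonal blocks are bounded by $\|T_2\|\,\|T_1^{-1}\|\,\|S^{-1}\|$, and the last is $\|S^{-1}\|$. Summing these (and absorbing the numerical constant from, say, $\|T^{-1}\|\le$ sum of block norms) gives $\|T^{-1}\|\le C(\|T_2\|)\,(1+\|T_1^{-1}\|^2)(1+\|S^{-1}\|)\le C(\|T_2\|)\,(1+\|T_1^{-1}\|)^2(1+\|S^{-1}\|)$, which is the claimed inequality \eqref{sl1}. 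I do not anticipate any genuine obstacle here; the only point requiring a little care is bookkeeping the norm estimates so that the exponent on $1+\|T_1^{-1}\|$ comes out to be exactly $2$ (the dominant contribution being the quadratic term $\|T_2\|^2\|T_1^{-1}\|^2\|S^{-1}\|$ from the $(1,1)$ block), and noting that the equivalence of invertibility genuinely uses the standing hypothesis that $T_1$ is invertible.
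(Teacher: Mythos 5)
Your proposal is correct and follows essentially the same route as the paper: a block-triangular (Schur complement) factorization of $T$ with respect to the invertible corner $T_1$, from which the invertibility equivalence, the identification of $S^{-1}$ as the bottom-right block of $T^{-1}$ (giving the lower bound), and the upper bound via norm estimates on the factors all follow. The only difference is the cosmetic choice of grouping the factors (symmetric LDU versus the paper's variant), which does not change the argument.
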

\begin{proof}
It is easy to check that
\begin{equation}\label{appaug1}
T=\left(
\begin{array}{cc}
T_1&T_2\\
T_2^t&T_3
\end{array}
\right)=\left(
\begin{array}{cc}
I&0\\
T_2^{t}T_1^{-1}&I
\end{array}
\right)\left(
\begin{array}{cc}
I&T_2\\
0&S
\end{array}
\right)\left(
\begin{array}{cc}
T_1&0\\
0&I
\end{array}
\right).
\end{equation}
It implies $T$ is invertible if and only if $S$ is invertible and also
the second inequality of \eqref{sl1}.  
By  \eqref{appaug1}, one has
\begin{eqnarray*}
T^{-1}&=&  \left(
\begin{array}{cc}
T_1&0\\
0&I
\end{array}
\right)^{-1}\left(
\begin{array}{cc}
I& T_2 \\
0& S
\end{array}
\right)^{-1}  \left(
\begin{array}{cc}
I&0\\
T_2^{-1}T_1^{-1}&I
\end{array}
\right)^{-1}\\
  &=& \left(
\begin{array}{cc}
T_1^{-1}&0\\
0&I
\end{array}
\right)\left(
\begin{array}{cc}
I&-T_2S^{-1}\\
0& S^{-1}
\end{array}
\right)   \left(
\begin{array}{cc}
I&0\\
-T_2^{-1}T_1^{-1}&I
\end{array}
\right)\\
&=&
 \left(
\begin{array}{cc}
\star&\star\\
\star& S^{-1}
\end{array}
\right).
\end{eqnarray*}
implying the first inequality of \eqref{sl1}.
\end{proof}

We then introduce the higher dimensional Cartan sets Lemma of  Goldstein-Schlag  \cite{gs}.  We denote by $\mathcal{D}(z,r)$  the standard disk on $\mathbb{C}$ of center  $z$ and radius  $r>0$.

\begin{lem}\cite[Lemma 2.15]{gs}\label{svcl}
Let $f(z_1,\cdots,z_J)$ be an analytic function defined in a ploydisk $\mathcal{P}=\prod\limits_{1\leq i\leq J}\mathcal{D}(z_{i,0},1/2)$ and $\phi=\log|f|$. Let $\sup\limits_{\underline{z}\in \mathcal{P}}\phi(\underline{z})\leq M,m\leq \phi(\underline{z}_0)$, $\underline{z}_0=(z_{1,0},\cdots,z_{J,0})$. Given $F\gg1$, there exists a set $\mathcal{B}\subset\mathcal{P}$  such that
\begin{equation}\label{cal1}
\phi(\underline{z})>M-C(J)F(M-m),\  \mathrm{for}\ \forall\ \underline{z}\in \prod\limits_{1\leq i\leq J}\mathcal{D}(z_{i,0},1/4)\setminus \mathcal{B},
\end{equation}
and
\begin{equation}\label{cal2}
 \mathrm{mes}(\mathcal{B}\cap\mathbb{R}^J)\leq  C(J)e^{-F^{1/J}}.
\end{equation}
\end{lem}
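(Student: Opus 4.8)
The plan is to prove the statement by induction on the number of variables $J$, peeling off one variable at each step and combining the $(J-1)$-variable case with the classical one-variable Cartan estimate on small values of analytic functions (see \cite{bbook}, Chapter~2; the precise disk-shrinkage factors there can be rescaled to the $1/2\to 1/4$ normalization used here). The one-variable input is exactly the case $J=1$ of the present lemma: if $g$ is analytic on $\mathcal{D}(\zeta_0,1/2)$ with $\sup_{\mathcal{D}(\zeta_0,1/2)}\log|g|\le M$ and $\log|g(\zeta_0)|\ge m$, then for every $H\gg 1$ there is a union of disks $\mathcal{B}=\bigcup_j\mathcal{D}(\zeta_j,\rho_j)$ with $\sum_j\rho_j<e^{-H}$ such that $\log|g(\zeta)|>M-CH(M-m)$ for $\zeta\in\mathcal{D}(\zeta_0,1/4)\setminus\mathcal{B}$; since each $\mathcal{D}(\zeta_j,\rho_j)$ meets $\mathbb{R}$ in an interval of length at most $2\rho_j$, this gives $\mathrm{mes}(\mathcal{B}\cap\mathbb{R})<2e^{-H}$, which is \eqref{cal1}--\eqref{cal2} with $J=1$.

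For the inductive step, assume the statement for $J-1$ variables and let $f,\mathcal{P},\phi,M,m,F$ be as in the hypothesis. The first move is to split the budget $F$ as $F_1:=F^{(J-1)/J}$ and $F_2:=F^{1/J}$, so that $F_1F_2=F$ and $F_1^{1/(J-1)}=F_2=F^{1/J}$ (both are $\gg1$ once $F$ is large in terms of $J$). Restrict $f$ to the slice through $z_{1,0}$: the function $h(z_2,\dots,z_J):=f(z_{1,0},z_2,\dots,z_J)$ is analytic on $\prod_{2\le i\le J}\mathcal{D}(z_{i,0},1/2)$, with $\sup\log|h|\le M$ and $\log|h(z_{2,0},\dots,z_{J,0})|=\phi(\underline{z}_0)\ge m$. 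Applying the inductive hypothesis with parameter $F_1$ yields an exceptional set $\mathcal{B}'$ in the $(z_2,\dots,z_J)$-variables with $\mathrm{mes}(\mathcal{B}'\cap\mathbb{R}^{J-1})\le C(J-1)e^{-F_1^{1/(J-1)}}=C(J-1)e^{-F^{1/J}}$ such that $\log|h(\underline{w})|>M-C(J-1)F_1(M-m)=:m'$ for all $\underline{w}\in\prod_{2\le i\le J}\mathcal{D}(z_{i,0},1/4)\setminus\mathcal{B}'$. For each such good $\underline{w}$, the one-variable function $g_{\underline{w}}(z_1):=f(z_1,\underline{w})$ is analytic on $\mathcal{D}(z_{1,0},1/2)$ with $\sup\log|g_{\underline{w}}|\le M$ and $\log|g_{\underline{w}}(z_{1,0})|=\log|h(\underline{w})|\ge m'$; the one-variable Cartan estimate with parameter $F_2$ produces a union of disks $\mathcal{B}_{\underline{w}}$ of total radius $<e^{-F_2}$, outside of which $\log|g_{\underline{w}}(z_1)|>M-CF_2(M-m')=M-C\,C(J-1)F_1F_2(M-m)$ on $\mathcal{D}(z_{1,0},1/4)$.

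Now assemble $\mathcal{B}:=(\mathcal{D}(z_{1,0},1/2)\times\mathcal{B}')\cup\{(z_1,\underline{w}):\underline{w}\notin\mathcal{B}',\ z_1\in\mathcal{B}_{\underline{w}}\}$. If $\underline{z}=(z_1,\underline{w})\in\prod_{1\le i\le J}\mathcal{D}(z_{i,0},1/4)\setminus\mathcal{B}$ then $\underline{w}\notin\mathcal{B}'$ and $z_1\notin\mathcal{B}_{\underline{w}}$, so $\phi(\underline{z})=\log|g_{\underline{w}}(z_1)|>M-C(J)F_1F_2(M-m)=M-C(J)F(M-m)$ with $C(J):=C\,C(J-1)$, which is \eqref{cal1}. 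For the measure, Fubini over the $(z_2,\dots,z_J)$-base (whose real trace has measure $\le 1$) gives $\mathrm{mes}(\mathcal{B}\cap\mathbb{R}^J)\le \mathrm{mes}(\mathcal{B}'\cap\mathbb{R}^{J-1})+\sup_{\underline{w}}\mathrm{mes}(\mathcal{B}_{\underline{w}}\cap\mathbb{R})\le C(J-1)e^{-F^{1/J}}+2e^{-F_2}\le C(J)e^{-F^{1/J}}$, which is \eqref{cal2}. If measurability of $\underline{w}\mapsto\mathcal{B}_{\underline{w}}$ is a concern, one enlarges each Cartan disk slightly so that its center lies on a fixed lattice, which changes none of the estimates.

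The step I expect to be the main obstacle is the budget bookkeeping just described. The split of $F$ must be chosen so that \emph{simultaneously} the loss in the lower bound stays linear in $F$ (forcing $F_1F_2\asymp F$) and the exceptional measure stays $\lesssim e^{-F^{1/J}}$ as contributed by both the $(J-1)$-variable tail $e^{-F_1^{1/(J-1)}}$ and the one-variable tail $e^{-F_2}$ (forcing $F_1\gtrsim F^{(J-1)/J}$ and $F_2\gtrsim F^{1/J}$). The choice $F_1=F^{(J-1)/J}$, $F_2=F^{1/J}$ is the unique one saturating all three requirements, and it is precisely this iteration that manufactures the $J$-th root in the exponent of \eqref{cal2}. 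The only other point requiring care is that $\{(z_1,\underline{w}):z_1\in\mathcal{B}_{\underline{w}}\}$ is a genuinely $J$-dimensional ``tube bundle'' over the $(z_2,\dots,z_J)$-base; its real trace is controlled fiberwise using that each fiber $\mathcal{B}_{\underline{w}}\cap\mathbb{R}$ has length at most twice the total radius of the disks forming $\mathcal{B}_{\underline{w}}$, i.e. less than $2e^{-F_2}$.
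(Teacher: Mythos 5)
Your proof is correct and is essentially the argument of the cited source: the paper gives no proof of this lemma, quoting it directly from \cite[Lemma 2.15]{gs}, and the proof there is exactly this induction on the number of variables --- fibering the one-variable Cartan estimate over the $(J-1)$-dimensional exceptional set, with the budget split $F_1=F^{(J-1)/J}$, $F_2=F^{1/J}$ producing the $J$-th root in \eqref{cal2}. The one point worth making explicit is that the one-variable input (sup bound on $\mathcal{D}(z_{1,0},1/2)$, conclusion on $\mathcal{D}(z_{1,0},1/4)$) must be taken in its Riesz-representation/Jensen form as in \cite{bbook}, not as a naive application of Cartan's classical lemma (which would require analyticity on a disk of radius $e/2$); you have flagged this rescaling and it is indeed standard.
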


\begin{proof}[\textbf{Proof of Lemma \ref{mcl}}]
The proof is similar to that of Proposition 14.1 in \cite{bbook} in case $J=1$ and Lemma 1.43 in \cite{bkick} without explicit bounds.  
In the following proof, $C=C(B_1,J)$  and $c=c(B_1,J)$.

 Let
$$\mu=10^{-2}{J^{-1}}\delta(1+B_1)^{-1}(1+B_2)^{-1}.$$ Fix
$$x_0\in\left[-\delta/2, \delta/2\right]^{J}$$
and consider $T(z)$ with $|z-x_0|=\sup\limits_{1\leq i\leq J}|z_i-x_{0,i}|<\mu$.  Thanks to Cauchy's estimate and (\ref{mc1}), one obtains for $|z-x_0|<\mu$,
$$\|{\partial_{z_i} T(z)}\|\leq \frac{4 B_1}{\delta},i=1,2,\cdots, J,$$
which implies
$$\|T(z)-T(x_0)\|\leq \frac{4JB_1\mu}{\delta}\leq 25^{-1}(1+B_2)^{-1}.$$
From the assumption (ii) of Lemma \ref{mcl}, we can find $V=V(x_0)$ so that $|V|\leq M$ and \eqref{mc2} is satisfied. Denote by $V^c=[1,N]\setminus V$. Thus using the standard Neumann series argument and (\ref{mc2}), one has
\begin{equation}\label{acb}
\|(R_{V^c}T(z)R_{V^c})^{-1}\|\leq 2B_2\ \mathrm{for}\  |z-x_0|<\mu.\end{equation}
We define for $|z-x_0|<\mu$ the analytic  self-adjoint function
\begin{equation}\label{scf}
S(z)=R_{V}T(z)R_{V}-R_{V}T(z)R_{V^c}(R_{V^c}T(z)R_{V^c})^{-1}R_{V^c}T(z)R_{V}.
\end{equation}
Then by (\ref{acb}) and (\ref{scf}), we have
\begin{equation}\label{sb}
\|S(z)\|\leq 3B_1^2B_2.
\end{equation}
Recalling Lemma \ref{sl}, if $S(z)$ is invertible, so is $T(z)$ and by (\ref{sl1}),
\begin{equation}\label{sib}
\|S^{-1}(z)\|\leq C\|T^{-1}(z)\|\leq CB_2^2(1+\|S^{-1}(z)\|).
\end{equation}
For $x\in\mathbb{R}^{J}$, 
one has
\begin{equation}\label{dets}
||S(x)||^M\geq |\det S(x)|=\prod_{\lambda\in \sigma(S(x))}|\lambda|\geq \|S^{-1}(x)\|^{-M}.
\end{equation}
By (\ref{sb}), one has
\begin{equation}\label{sib1}
\|S^{-1}(x)\|\leq \frac{\|S(x)\|^{M-1}}{|\det S(x)|}\leq \frac{(3B_1^2B_2)^M}{|\det S(x)|}.
\end{equation}
Let
$$\phi(z)=\log|\det S(x_0+\mu z)|,\  |z|<1.$$
Then by (\ref{dets}) and (\ref{sb}),
\begin{equation}\label{pub}
\sup_{|z|<1}\phi(z)\leq C M\log B_2.
\end{equation}
By (\ref{mc3}) and the definition of $\mu$, there is some $x_1$ with $|x_0-x_1|<\mu/10$  such that
\begin{equation}
\|T^{-1}(x_1)\|\leq B_3.
\end{equation}
Hence by (\ref{sib}), $\|S^{-1}(x_1)\|\leq CB_3$, and from (\ref{dets}),
\begin{equation}\label{plb}
\phi(a)\geq -CM\log B_3,
\end{equation}
where $a=\frac{x_1-x_0}{\mu}$, so $ |a|<1/10$.      Let
\begin{equation*}
\mathcal{P}=\prod_{1\leq i\leq J}\mathcal{D}(a_i,{1}/{2}).
\end{equation*}
Then  one has
\begin{equation*}
\sup_{z\in\mathcal{P}}\phi(z)\leq C M\log B_2, \phi(a)\geq- CM\log B_3.
\end{equation*}
Applying Lemma \ref{svcl} and recalling (\ref{cal1}), (\ref{cal2}), for any $F\gg1$, there is some set $\mathcal{B}\subset \prod\limits_{1\leq i\leq J}\mathcal{D}(a_i,{1}/{4})$ with
\begin{equation}\label{plbb}
\phi(z)\geq -CF M\log(B_2+B_3)\ \mathrm{for}\ z\in \prod\limits_{1\leq i\leq J}\mathcal{D}(a_i,{1}/{4})\setminus \mathcal{B},
\end{equation}
and
\begin{equation}\label{bm}
\mathrm{mes}(\mathcal{B}\cap\mathbb{R}^{J})\leq Ce^{-F^{1/J}}.
\end{equation}
For $0<\epsilon<1$, let
\begin{equation*}
F =\frac{-c\log \epsilon}{ M\log(B_2+B_3)}.
\end{equation*}
Then by (\ref{plbb}) and (\ref{bm}),
\begin{eqnarray}
\nonumber&&\mathrm{mes}\left\{x\in\mathbb{R}^{J}: \ |x-x_1|<\mu/4\ \mathrm{and}\ |\det S(x)|\leq \epsilon\right\}\\
\nonumber&&\ \ \ \ \ \  = \mu^{{J}}\mathrm{mes}\left\{x\in\mathbb{R}^{J}:\  |x-a|<1/4\ \mathrm{and}\ \phi(x)\leq \log \epsilon\right\}\\
\nonumber&&\ \ \ \ \ \ \leq C\mu^{{J}} e^{-F^{1/J}}.
\end{eqnarray}
Since  $|x_0-x_1|<\mu/10$, we have
\begin{equation}\label{x0b}
\mathrm{mes}\left\{x\in\mathbb{R}^{J}:\  |x-x_0|<\mu/8\ \mathrm{and}\ |\det(S(x))|\leq \epsilon\right\}\leq C \mu^{{J}} e^{-c\left(\frac{\log \epsilon^{-1}}{M\log(B_2+B_3)}\right)^{1/J}}.
\end{equation}
Recalling (\ref{sib}), (\ref{sib1}) and (\ref{mc4}), one has for $|x-x_0|<\mu/8$ and $|\det S(x)|\geq \epsilon$,
\begin{equation}\label{aib}
\|T^{-1}(x)\|\leq C(1+B_2^2)(1+\epsilon^{-1}(3B_1^2{B_2})^M)\leq C\epsilon^{-2}.
\end{equation}
Covering $[-\frac{\delta}{2},\frac{\delta}{2}]^{J}$ by cubes of side  $\mu/4$, and combining (\ref{x0b}) and (\ref{aib}), one has
$$\mathrm{mes}\left\{x\in\left[-\delta/2, \delta/2\right]^{J}:\  \|T^{-1}(x)\|\geq \epsilon^{-2}\right\}\leq C\delta^{J}e^{-c\left(\frac{\log \epsilon^{-1}}{M\log(B_2+B_3)}\right)^{1/J}}.$$
\end{proof}

\section*{Acknowledgments}
   We are grateful to Jean Bourgain for his encouragement. This research was
 supported by NSF DMS-1401204, DMS-1901462,  and DMS-1700314.

\end{document}